\newcommand{\showcomments}{yes}
\renewcommand{\showcomments}{no}
\newsavebox{\commentbox}
\newenvironment{com}%
\newtheorem{thm}{Theorem}[section]
\newtheorem{lem}[thm]{Lemma}
\newtheorem{cor}[thm]{Corollary}
\theoremstyle{definition}
\newtheorem{defn}[thm]{Definition}
\newtheorem{rem}[thm]{Remark}
\newtheorem{exmp}[thm]{Example}
\newtheorem{construction}[thm]{Construction}
\DeclareMathOperator{\Aut}{Aut}
\DeclareMathOperator{\diam}{diam}
\DeclareMathOperator{\stab}{Stabilizer}
\DeclareMathOperator{\Min}{Min}
\newcommand{\field}[1]{\mathbb{#1}}
\newcommand{\integers}{\ensuremath{\field{Z}}}
\newcommand{\dist}{\textup{\textsf{d}}}
\newcommand{\systole}[1]{\ensuremath{\| #1 \|}}
\newcommand{\nclose}[1]{\ensuremath{\langle\!\langle#1\rangle\!\rangle}}
\begin{document}

\title{Cubulating Small Cancellation Free Products}

\author{Kasia Jankiewicz}
\author[D.~T.~Wise]{Daniel T. Wise}
	\address{Dept. of Math.\\
			University of California\\
			Santa Cruz, USA 95064}
	\email{kasia@ucsc.edu}
	\address{Dept. of Math. \& Stats.\\
                    McGill University \\
                    Montreal, Quebec, Canada H3A 0B9}
           \email{wise@math.mcgill.ca}

%\author[D.~T.~Wise]{Daniel T. Wishnetsky}
          % \address{Dept. of Math. \& Stats.\\
              %      McGill University \\
                  %  Montreal, Quebec, Canada H3A 0B9}
          % \email{wise@math.mcgill.ca}
\subjclass[2010]{20F67, 20E08, 20F06}
\keywords{Small Cancellation, Cube Complexes}
\date{\today}
\thanks{The first author was supported by the NSF grant DMS-2105548/2203307}
\thanks{The second author was supported by NSERC}

\begin{com}
{\bf \normalsize COMMENTS\\}
ARE\\
SHOWING!\\
\end{com}

\begin{abstract}
We give a simplified approach to the cubulation of small-cancellation quotients of free products of cubulated groups.
We construct fundamental groups of compact nonpositively curved cube complexes that do not virtually split.
 \end{abstract}

\maketitle

\section{Introduction}

Martin and Steenbock recently showed that a small-cancellation quotient of a free product of cubulated groups is cubulated
\cite{MartinSteenbock2016}. In this paper we revisit their theorem in a slightly weaker form, and reprove it in a manner that capitalizes on the available technology. Combined with an idea of Pride's about small-cancellation groups that do not split, we answer a question posed to us by Indira Chatterji by constructing an example
of a compact nonpositively curved cube complex $X$ such that $\pi_1X$ is nontrivial but does not virtually split.

Section~\ref{sec:background} recalls the definitions and theorems that we will use from cubical small-cancellation theory. Section~\ref{sec:relative cocompactness} recalls properties of the dual
cube complex in the relatively hyperbolic setting. Section~\ref{sec:small can free prod} recalls the definition of small-cancellation over free products, and describe associated cubical presentations. Section~\ref{sec:pride} reproves Pride's result about small-cancellation groups that do not split. Section~\ref{sec:main},
relates small-cancellation over free products to cubical small-cancellation theory,
and proves our main result which is Theorem~\ref{thm:main}.
Finally, Section~\ref{sec:doesn't split} combines Pride's method with Theorem~\ref{thm:main} to provide
cubulated groups that do not virtually split in Example~\ref{exmp:nonsplitting cubical group}.

\section{Background on Cubical Small Cancellation}\label{sec:background}
\subsection{Nonpositively curved cube complexes}
We shall assume that the reader is familiar with \emph{CAT(0) cube complexes} which are CAT(0) spaces
having cell structures, where each cell is isometric to a cube. We refer the reader to \cite{BridsonHaefliger, Sageev95, Leary_KanThurston, WiseIsraelHierarchy}. A \emph{nonpositively curved cube complex} is a cell-complex $X$ whose universal cover $\widetilde X$ is a CAT(0) cube complex. A \emph{hyperplane}  $\widetilde U$ in $\widetilde X$ is a subspace
whose intersection with each $n$-cube $[0,1]^n$ is either empty or consists of the subspace where exactly one coordinate is restricted to $\frac12$.
For a hyperplane $\widetilde{U}$ of $\widetilde{X}$, we let $N(\widetilde{U})$ denote its \emph{carrier}, which is the union of all closed cubes intersecting $\widetilde{U}$. The hyperplanes $\widetilde U$ and $\widetilde V$ \emph{osculate} if  $N(\widetilde U)\cap N(\widetilde V)\neq \emptyset$ but
$\widetilde U \cap \widetilde V=\emptyset$.
We will use the \emph{combinatorial metric} on a nonpositively curved cube complex $X$,
so the distance between two points is the length of the shortest combinatorial path connecting them.
The \emph{systole} $\systole{X}$ is the infimal length of an essential combinatorial closed path in $X$.
A map $\phi:Y\rightarrow X$ between nonpositively curved cube complexes is a \emph{local isometry}
if $\phi$ is locally injective, $\phi$ maps open cubes homeomorphically to open cubes,
and whenever $a,b$ are concatenatable edges of $Y$,
 if $\phi(a)\phi(b)$ is a subpath of the attaching map of a 2-cube of $X$,
 then $ab$ is a subpath of a 2-cube in $Y$.

\subsection{Cubical presentations and Pieces}
\begin{defn}
A \emph{cubical presentation} $\langle X \mid Y_1, \ldots, Y_m \rangle$ consists of a nonpositively curved cube complex $X$, and a set of local isometries $Y_i \looparrowright X$ of nonpositively curved cube complexes. We use the notation $X^*$ for the cubical presentation above. As a topological space, $X^*$ consists of $X$ with a cone on $Y_i$ attached to $X$ for each $i$.
\end{defn}
We often consider the universal cover $\widetilde{X^*}$, whose \emph{cubical part} is the preimage of $X$ under the covering map.
The cubical part serves as a ``Cayley graph", whose ``relators" are the cones.

\begin{defn}
A \emph{cone-piece} of $X^*$ in $Y_i$ is a component of $\widetilde{Y}_i \cap \widetilde{Y}_j$, where
$\widetilde{Y}_i$ is a lift of $Y_i$ to the universal cover $\widetilde {X^*}$, excluding the case where $i = j$.
A \emph{wall-piece} of $X^*$ in $Y_i$ is a component of $\widetilde{Y}_i \cap N(\widetilde{U})$, where $\widetilde{U}$ is a hyperplane that is disjoint from  $\widetilde{Y}_i$.
For a constant $\alpha > 0$, we say $X^*$ satisfies the $C'(\alpha)$ \emph{small-cancellation} condition if
$\diam (P) < \alpha \systole{Y_i}$
for every cone-piece or wall-piece $P$ involving $Y_i$.
\end{defn}

When $\alpha$ is small, the quotient $\pi_1X^*$ has good behavior.
For instance, when $X^*$ is $C'(\frac{1}{12})$ then each immersion $Y_i \looparrowright X$ lifts to an embedding
  $Y_i \hookrightarrow \widetilde {X^*}$. This is proven in \cite[Thm~4.1]{WiseIsraelHierarchy},
  and we also refer to \cite{Jankiewicz17} for analogous results at $\alpha=\frac{1}{9}$.

\subsection{The $B(8)$ condition}
We now describe a special case of the B(8) condition within the context of $C'(\alpha)$ 
metric small-cancellation.
A \emph{piece-path} in $Y$ is a path in a piece of $Y$.

\begin{defn} \label{Def:B6}
A cubical presentation $X^*$ satisfies the \emph{$B(8)$ condition} if there is a wallspace structure on each $Y_i$ as follows:

\begin{enumerate}
\item \label{B6:wallspace} The collection of hyperplanes of each $Y_i$ are partitioned into classes
such that no two hyperplanes in the same class cross or osculate, and the union $U = \cup U_k$ of the hyperplanes in a class forms a \emph{wall} in the sense that $Y_i - U$ is the disjoint union of a left and right halfspace.
\item \label{B6:homotopy} If $P$ is a path that is the concatenation of at most
 8 piece-paths
 and $P$ starts and ends on the carrier
$N(U)$ of a wall then $P$ is path-homotopic into $N(U)$.
\item \label{B6:aut} The wallspace structure is preserved by the group $\Aut(Y_i\rightarrow X)$ which consists of automorphisms $\phi:Y_i\to Y_i$ such that $\begin{array}{ccccc}Y_i&&\longrightarrow &&Y_i\\&\searrow&&\swarrow&\\&&X&&
\end{array}$ commutes.
\end{enumerate}
\end{defn}

\subsection{Properness Criterion}
A \emph{closed-geodesic} $w\rightarrow Y$ in a nonpositively curved cube complex, is a combinatorial immersion
of a circle whose universal cover $\tilde w$  lifts to a combinatorial geodesic $\tilde w\rightarrow \widetilde Y$
in the universal cover of $Y$.

%\begin{com}K:Is 'piecefully convex' needed?
%\end{com}
%A hyperplane $U$ in $Y$ is \emph{piecefully convex} if the following holds: 
%for any path $\xi\rho\to Y$ with endpoints on $N(U)$, 
%if $\xi$ is a geodesic and $\rho$ is trivial 
%or lies in a piece of $Y$ containing an edge dual to $U$, 
%then $\xi\rho$ is path-homotopic in $Y$ to a path $\mu\to N(U)$. 
%If $U$ is piecefully convex then $N(U)\to Y$ is convex.

We quote the following criterion from \cite[Thm 3.5]{FuterWise2021}.
  \begin{thm}\label{Thm:C'18Proper}
Let $X^* = \langle X \mid Y_1, \ldots, Y_k \rangle$ be a cubical presentation. Suppose $X$ is compact, and each $Y_i$ is compact and deformation retracts to a closed combinatorial geodesic $w_i$. Additionally, suppose that for every hyperplane $U$ of $Y_i$  %is piecefully convex, $U$ 
%intersects $w_i$
the complement $Y_i\setminus U$ is contractible, 
 and $U$ has an embedded carrier with $\diam N(U)<\frac 1 {20}\systole{Y_i} $. If $X^*$ is $C'(\frac{1}{20})$ then $X^*$ is $B(8)$ and  $\pi_1 X^*$ acts properly and cocompactly on the CAT(0) cube complex dual to the wallspace on $\widetilde{X^*}$.

Moreover, if each $\pi_1 Y_i \subset \pi_1X$ is a maximal cyclic subgroup,
 then $\pi_1X^*$ acts freely and cocompactly on the associated dual CAT(0) cube complex.
\end{thm}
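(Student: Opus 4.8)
The plan is to pass through the $B(8)$ machinery in three stages: (1) install a wallspace on each $Y_i$ and verify the three clauses of Definition~\ref{Def:B6}; (2) apply the structure theory of $B(8)$ cubical presentations to produce an action of $\pi_1X^*$ on the dual CAT(0) cube complex and promote it to a proper cocompact action; (3) use the maximal-cyclic hypothesis to see that $\pi_1X^*$ is torsion-free, and hence that the action is free. For stage (1), since $Y_i$ deformation retracts to the closed geodesic $w_i$ we have $\pi_1 Y_i\cong\integers=\langle[w_i]\rangle$, and $\widetilde{Y_i}$ deformation retracts to a line $\widetilde{w_i}$ carrying a translation action of $\integers$. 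A combinatorial geodesic crosses each hyperplane at most once, and a short argument using the retraction to $w_i$ together with the hypothesis that $Y_i\setminus U$ is contractible (hence connected) shows each hyperplane of $Y_i$ is dual to exactly one edge of $w_i$; so there are exactly $\systole{Y_i}$ hyperplanes, cyclically ordered along $w_i$. Subdividing once if necessary --- which scales $\systole{Y_i}$, every piece diameter, and every carrier diameter by $2$, so all hypotheses persist --- I may assume $\systole{Y_i}$ is even, and I declare the class of the hyperplane $U_j$ dual to the $j$-th edge of $w_i$ to be $\{U_j,U_{j+\systole{Y_i}/2}\}$, the pair of hyperplanes dual to antipodal edges (indices modulo $\systole{Y_i}$). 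The union of such a pair cuts $w_i$ into two arcs, and because each carrier is embedded with $\diam N(U)<\tfrac{1}{20}\systole{Y_i}$ while the antipodal edges are $\tfrac{1}{2}\systole{Y_i}$ apart, the two carriers are disjoint and the pair's removal cuts $Y_i$ into exactly two halfspaces; the same estimate shows the two hyperplanes of a class neither cross nor osculate. This is clause~\eqref{B6:wallspace}. Clause~\eqref{B6:aut} follows since $\Aut(Y_i\to X)$ is finite and permutes the hyperplanes respecting their cyclic order, hence respects the antipodal pairing.

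The crux, and where I expect the real difficulty, is clause~\eqref{B6:homotopy}. Let $P$ be a concatenation of at most $8$ piece-paths that starts and ends on the carrier $N(U)$ of a wall $U$. Each of the $8$ piece-paths lies in a cone-piece or wall-piece of diameter $<\tfrac{1}{20}\systole{Y_i}$ by the $C'(\tfrac{1}{20})$ hypothesis, and $\diam N(U)<\tfrac{1}{20}\systole{Y_i}$, so the whole figure has diameter $<\tfrac{9}{20}\systole{Y_i}<\tfrac{1}{2}\systole{Y_i}$. I would lift $P$ into $\widetilde{X^*}$, look inside the relevant copy of $\widetilde{Y_i}$, and use that $\widetilde{Y_i}$ retracts to the line $\widetilde{w_i}$: a path of diameter less than $\systole{Y_i}$ joining two points of the convex subcomplex $N(\widetilde U)$ cannot wind along $\widetilde{w_i}$, since one period costs $\systole{Y_i}$, so $P$ stays in a bounded neighbourhood that deformation retracts onto a single copy of $N(\widetilde U)$; pushing back down, $P$ is path-homotopic into $N(U)$. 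Turning ``cannot wind'' into a rigorous statement --- balancing the retraction to $\widetilde{w_i}$, convexity of carriers, and the $\tfrac{9}{20}$ estimate --- is the technical heart of the whole proof. Once the three clauses hold, $X^*$ is $B(8)$.

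For stage (2) I would invoke the structure theory of $B(8)$ cubical presentations (see \cite{WiseIsraelHierarchy}): the walls of the $Y_i$ extend across the cones of $\widetilde{X^*}$ to a $\pi_1X^*$-invariant wallspace whose walls are two-sided with convex carriers, so $\pi_1X^*$ acts on the dual CAT(0) cube complex $C$. Cocompactness is bookkeeping: $X$ and the finitely many $Y_i$ are compact, so there are finitely many $\pi_1X^*$-orbits of walls, and finitely many orbits of pairwise-crossing families of walls, hence finitely many orbits of cubes of $C$. Properness is the assertion that the wall pseudometric on $\widetilde{X^*}$ is proper, which I would obtain from a lower bound, linear in length, on the number of walls crossed by a combinatorial geodesic of $\widetilde{X^*}$: the hyperplanes inherited from the cubical part already separate points of the cubical part linearly, while a geodesic threading through cones additionally meets the internal walls of those $Y_i$, and the $B(8)$ condition prevents distinct walls from coalescing en route. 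So $\pi_1X^*$ acts properly and cocompactly on $C$.

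For the ``Moreover'' clause, suppose each $\pi_1 Y_i\subset\pi_1X$ is maximal cyclic, so no $w_i$ is a proper power. Then the usual small-cancellation analysis of torsion applies: a torsion element of $\pi_1X^*$ is conjugate into the image of some $\pi_1 Y_i$, which is trivial in $\pi_1X^*$, unless it originates from a relator that is a proper power --- now excluded --- so $\pi_1X^*$ is torsion-free. Since a torsion-free group acting properly on a CAT(0) cube complex has trivial cube stabilisers, the (still cocompact) action of $\pi_1X^*$ on $C$ is free.
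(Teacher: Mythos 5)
You should first be aware that the paper does not prove this statement: it is quoted verbatim from \cite[Thm 3.5]{FuterWise2021}, so there is no in-paper proof to compare yours against. What the paper does supply is the remark immediately following the theorem, which confirms that the intended wallspace on each $Y_i$ pairs hyperplanes dual to antipodal edges of $w_i$ (after subdividing so $|w_i|$ is even) --- exactly the structure you set up in stage (1). So your architecture --- build that wallspace, verify the three clauses of Definition~\ref{Def:B6}, then run the $B(8)$ machinery to get the dual cube complex and the proper cocompact action --- is the standard and correct route, the one taken in the cited source and in \cite{WiseIsraelHierarchy}.

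That said, as a proof your proposal has two genuine gaps, both of which you partially acknowledge but which are precisely the content of the cited theorem. First, clause~\eqref{B6:homotopy}: your diameter count ($8$ piece-paths each of diameter $<\tfrac{1}{20}\systole{Y_i}$ gives $<\tfrac{8}{20}\systole{Y_i}$, versus a separation of roughly $\tfrac12\systole{Y_i}-\tfrac{2}{20}\systole{Y_i}$ between the two carriers of a wall) correctly forces $P$ to stay near a single hyperplane of the wall, but ``$P$ stays near $N(U)$'' is weaker than ``$P$ is path-homotopic into $N(U)$''; closing that gap is where the hypotheses that $N(U)$ is embedded and that $Y_i\setminus U$ is contractible must actually be deployed, and you leave it as an admitted black box. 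Second, and more seriously, properness of the action on the dual cube complex is not ``the assertion that the wall pseudometric is proper'' plus bookkeeping: it requires a linear separation theorem (points at distance $n$ in $\widetilde{X^*}$ are separated by at least $\epsilon n - C$ walls), whose proof is a substantial disc-diagram argument and is arguably the hardest part of the $B(8)$ theory. Your stage (3) (maximal cyclic $\Rightarrow$ relators are not proper powers $\Rightarrow$ cone stabilizers are trivial $\Rightarrow$ torsion-free $\Rightarrow$ the proper action is free) is the standard argument and is fine modulo the quoted fact that all torsion in a $C'(\tfrac{1}{20})$ quotient is conjugate into a cone stabilizer. In short: right strategy, consistent with the paper's (quoted) statement and its remark on the wallspace, but the two analytic cores of the theorem are named rather than proved.
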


The wallspace that is assigned to each $Y_i$ in the above theorem
has a wall for hyperplanes dual to pairs of antipodal edges in $w_i$. 
(The complex $X$ is subdivided to ensure that each $|w_i|$ is even.)
\subsection{The wallspace structure}\label{sec:wallspace structure}
\begin{defn} [The walls]
When $X^*$ satisfies the $B(8)$ condition, $\widetilde{X^*}$ has a \emph{wallspace} structure which we now briefly describe:
Two hyperplanes $H_1,H_2$ of $\widetilde{X^*}$ are \emph{cone-equivalent} if $H_1\cap Y_i$ and $H_2\cap Y_i$ lie in the same wall of $Y_i$ for some lift $Y_i\hookrightarrow \widetilde{X^*}$.
Cone-equivalence generates an equivalence relation on the collection of hyperplanes of $\widetilde{X^*}$.
A \emph{wall} of $\widetilde{X^*}$ is the union of all hyperplanes in an equivalence class.
When $X^*$ is $B(8)$, the hyperplanes in an equivalence class are disjoint, and a wall $w$ can
be regarded as a wall in the sense that $\widetilde{X^*}$ is the union of two halfspaces meeting along $w$.
 \end{defn}
\begin{lem}\label{lem:wall cone intersection}
Let $W$ be a wall of $\widetilde{X^*}$.
Let $Y\subset \widetilde{X^*}$ be a lift of some cone $Y_i$ of $X^*$.
Then either $W\cap Y=\emptyset$ or $W\cap Y$ consists of a single wall of $Y$.
\end{lem}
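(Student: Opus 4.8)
The plan is to argue that $W\cap Y$ is nonempty and connected, and then that it is a single wall of $Y$; the main obstacle is connectivity, which is exactly where the $B(8)$ condition must be invoked.

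First I would recall that $W$ is by definition a union of hyperplanes $H$ of $\widetilde{X^*}$ lying in a single cone-equivalence class, and that each such $H$ meets each lift of a cone $Y_j$ in a (possibly empty) wall of $Y_j$. So if $W\cap Y\neq\emptyset$, pick a hyperplane $H\subset W$ with $H\cap Y\neq\emptyset$; then $H\cap Y$ is a union of hyperplanes of $Y$ forming a wall $U$ of $Y$, and clearly $U\subseteq W\cap Y$. The content of the lemma is the reverse inclusion: no other hyperplane of $Y$ meets $W$. Suppose $H'\subset W$ is another hyperplane with $H'\cap Y\neq\emptyset$, giving a wall $U'$ of $Y$; I want $U'=U$. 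Since $H$ and $H'$ lie in the same cone-equivalence class, there is a chain of hyperplanes $H=H_0,H_1,\dots,H_n=H'$ in which consecutive ones are cone-equivalent, i.e.\ $H_{t-1}\cap Y^{(t)}$ and $H_t\cap Y^{(t)}$ lie in a common wall of some lift $Y^{(t)}$ of a cone. The strategy is to show that one may assume $n$ is small and each intermediate $Y^{(t)}$ is "close" to $Y$, and then derive a contradiction from minimality using $B(8)$.

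The key step is the following: if $H_{t-1}\cap Y\neq\emptyset$ and $H_{t-1},H_t$ are cone-equivalent via a cone $Y'\neq Y$, then $H_t\cap Y\neq\emptyset$ as well, and moreover $H_{t-1}\cap Y$, $H_t\cap Y$ lie in the same wall of $Y$. Here is the mechanism. Let $U'$ be the wall of $Y'$ containing $H_{t-1}\cap Y'$ and $H_t\cap Y'$; its carrier $N(U')$ is connected, and picking basepoints on $H_{t-1}\cap Y'$ and $H_t\cap Y'$ we get a path in $N(U')\cap Y'$ joining a point of $H_{t-1}$ to a point of $H_t$. Now $H_{t-1}\cap Y$ is nonempty and lies in $N(U')$ near its start (after possibly extending inside $N(H_{t-1})$), so we obtain a path $P$ starting and ending on $N(U')$ that passes through $Y$; each segment $Y\cap Y'$ is a cone-piece, hence a piece, and the portions inside $N(H_{t-1})$, $N(H_t)$ contribute boundedly many additional piece-paths. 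Thus $P$ is a concatenation of at most $8$ piece-paths beginning and ending on $N(U')$, and by $B(8)$(\ref{B6:homotopy}) it is path-homotopic into $N(U')$. This forces the hyperplane $H_{t-1}$ (and similarly $H_t$), which crosses $Y$, to actually cross $Y'$ in the wall $U'$ — more precisely it shows $H_{t-1}\cap Y$ and $H_t\cap Y$ are contained in a common wall of $Y$, because two hyperplanes of $Y$ that are separated by a wall of $Y$ cannot both be fellow-travelled through a single piece without violating the $C'$ (hence $B(8)$) hypotheses. Iterating along the chain, $H_0\cap Y,\dots,H_n\cap Y$ all lie in one wall of $Y$, so $U'=U$ and hence $W\cap Y=U$ is a single wall.

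To make the iteration legitimate I would first reduce to a \emph{shortest} chain of cone-equivalences connecting a fixed hyperplane of $W$ meeting $Y$ to an arbitrary hyperplane $H'\subset W$ meeting $Y$, and use that if some intermediate $Y^{(t)}$ equals $Y$ we can splice, while if it does not, the $B(8)$ argument above applies; a standard minimality/innermost-disk argument (as in the proofs of the basic properties of $B(8)$ presentations in \cite{WiseIsraelHierarchy}) rules out the path $P$ ever needing more than $8$ piece-paths. The main obstacle, and the step I expect to require the most care, is precisely the bookkeeping that bounds the number of piece-paths in $P$ by $8$: one must account for the two cone-pieces $Y\cap Y'$, for transitions through the carriers $N(H_{t-1})$ and $N(H_t)$ (each of which is itself a piece-type subcomplex contributing wall-pieces), and ensure no double counting, so that condition (\ref{B6:homotopy}) genuinely applies. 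Everything else — nonemptiness is immediate once we assume $W\cap Y\neq\emptyset$, and the passage from "all $H_t\cap Y$ in one wall of $Y$" to "$W\cap Y$ is that wall" is formal — follows routinely.
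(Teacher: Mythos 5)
The paper does not actually prove this lemma: it is stated without proof as part of the standard wallspace machinery for $B(8)$ cubical presentations, imported from \cite{WiseIsraelHierarchy} (it sits immediately before the explicitly cited \cite[Cor~5.30]{WiseIsraelHierarchy}, and is proven there by the disc-diagram/ladder classification for $B(6)$ presentations). So your attempt has to stand on its own, and as written it has a genuine gap at its central step.

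The gap is in the chain argument. You take hyperplanes $H,H'\subset W$ both meeting $Y$, connect them by a chain $H=H_0,\dots,H_n=H'$ of cone-equivalences, and your ``key step'' assumes that $H_{t-1}\cap Y\neq\emptyset$ propagates to $H_t\cap Y\neq\emptyset$ along the chain. There is no reason for this: cone-equivalence of $H_{t-1}$ and $H_t$ is witnessed by some \emph{other} lift $Y'$, which may be far from $Y$, and the chain realizing the equivalence of $H$ and $H'$ may wander arbitrarily far from $Y$ before returning. The reduction to a chain that stays ``close'' to $Y$ with a uniformly bounded number of piece-paths is not a routine ``minimality/innermost-disk argument''---it is precisely the content of the lemma, and in \cite{WiseIsraelHierarchy} it is obtained by forming a disc diagram between a path in the carrier $N(W)$ (traveling along hyperplane carriers and cones of $W$) and a path in $Y$, and invoking the classification of minimal-complexity diagrams (the ladder theorem) to force the diagram to collapse. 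Without that input, your path $P$ has no a priori bound on its number of piece-paths, so condition (\ref{B6:homotopy}) of $B(8)$ does not apply. Separately, even where it does apply, the inference from ``$P$ is path-homotopic into $N(U')$'' to ``$H_{t-1}\cap Y$ and $H_t\cap Y$ lie in a common wall of $Y$'' is asserted rather than argued; being homotopic into a carrier does not by itself identify two hyperplanes of $Y$ as belonging to one wall. The correct route is the diagrammatic one, and your sketch does not substitute for it.
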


The \emph{carrier} $N(W)$ of a wall $W$ of $\widetilde{X^*}$
consists of the union of all carriers of hyperplanes of $W$ together with all cones intersected
 by hyperplanes of $W$.
The following appears as \cite[Cor~5.30]{WiseIsraelHierarchy}:

\begin{lem}[Walls quasi-isometrically embed]\label{lem:wallsQIembed}
Let $X^*$ be $B(8)$.
Suppose that pieces have uniformly bounded diameter.
Then for each wall $W$, the map $N(W)\rightarrow \widetilde{X^*}$ is a quasi-isometric embedding with uniform quasi-isometry constants.
\end{lem}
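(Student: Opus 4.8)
The inclusion $N(W)\hookrightarrow\widetilde{X^*}$ is $1$-Lipschitz for the combinatorial path metrics, so the content is a lower bound on $\dist_{\widetilde{X^*}}$ in terms of the intrinsic metric on $N(W)$. As for any geodesic space, this is equivalent --- i.e.\ to showing $N(W)$ is uniformly quasiconvex in $\widetilde{X^*}$ --- to producing a single constant $R$, depending only on the $B(8)$ data and the bound on piece diameters and not on $W$, with the two properties: (a) any two points of $N(W)$ at $\widetilde{X^*}$-distance $\le 1$ are joined by a path of length $\le R$ inside $N(W)$; and (b) every combinatorial geodesic of $\widetilde{X^*}$ with both endpoints on $N(W)$ lies in the $R$-neighbourhood of $N(W)$. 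Property (a) is local: every point of $N(W)$ lies in the carrier $N(H)$ of a hyperplane $H$ of $W$ or in a cone met by $W$; hyperplane carriers are convex in $\widetilde{X^*}$; by Lemma~\ref{lem:wall cone intersection} the wall $W$ meets each such cone $Y$ in a single wall of $Y$, and consecutive carriers $N(H),N(H')$ with $H,H'$ cone-equivalent are joined through one such cone along the carrier of its wall, which in the situation of Theorem~\ref{Thm:C'18Proper} has diameter $<\tfrac1{20}\systole{Y_i}$ and is in any case bounded. Thus a radius-$1$ ball meets $N(W)$ in a bounded union of convex pieces glued along bounded-diameter pieces, hence is boundedly connected.

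For property (b), let $\gamma$ be a geodesic from $x$ to $y$ with $x,y\in N(W)$, and let $\delta\subset N(W)$ be a path from $y$ to $x$ obtained by alternating geodesic segments inside carriers $N(H)$ of hyperplanes $H$ of $W$ with short segments crossing the cones met by $W$, each cone-segment of bounded length as in (a). The closed path $\gamma\bar\delta$ bounds a reduced disk diagram $D\to\widetilde{X^*}$ over the cubical presentation, with square cells and cone-cells. Applying the standard ladder/Greendlinger structure of reduced disk diagrams over $B(8)$ presentations: either $D$ is a ladder, or $\partial D$ carries at least two spurs or generalized shells, each shell having outer path a concatenation of boundedly many pieces. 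Since $\gamma$ is geodesic it carries no spur, and a shell of $D$ along $\gamma$ is incompatible with $\gamma$ being shortest; hence all spurs and shells lie along $\delta$. Excising a spur, or replacing a shell's outer path along $\delta$ by its shorter inner path, strictly decreases the complexity of $D$, keeps the new $\delta$ inside $N(W)$, and lengthens $\delta$ only by an amount controlled by the piece-diameter bound; inducting, we reach the ladder case, in which $\gamma$ uniformly fellow-travels a chain of cells each mapping into a hyperplane carrier or a cone of $W$, hence into $N(W)$ --- and tracking the reductions gives the same bound for the original $\gamma$.

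The main obstacle is the step that the cells of the reduced diagram $D$, and in particular of the terminal ladder, are \emph{subordinate to $W$}: each cone-cell maps to a cone met by $W$ and each maximal square band crosses a hyperplane of $W$. Without this the arcs of $\gamma$ one fellow-travels need not lie near $N(W)$. This is precisely where the $B(8)$ homotopy condition~\ref{B6:homotopy} enters: applied to the portions of $\delta$ lying in wall carriers and cones of $W$, the $8$-piece-path condition prevents the carrier of $W$ from nearly closing up on itself along few pieces, which both certifies that $D$ is genuinely small-cancellation-like (so the ladder structure holds with uniform constants) and, together with Lemma~\ref{lem:wall cone intersection} propagating the wall structure across the cones, forces the diagram to remain ``under'' $N(W)$. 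The remaining work is the routine bookkeeping that the quasi-isometry constant accumulated through the induction is bounded solely in terms of the $B(8)$ constant and the piece-diameter bound, hence uniform over all walls $W$.
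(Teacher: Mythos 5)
First, a point of comparison: the paper does not prove Lemma~\ref{lem:wallsQIembed} at all --- it is quoted verbatim from \cite[Cor~5.30]{WiseIsraelHierarchy} --- so there is no in-paper argument to measure yours against. Your outline does follow the same strategy as the cited source: build a path $\delta$ in $N(W)$ alternating between hyperplane carriers and cones crossed by $W$, compare it to a geodesic $\gamma$ via a reduced disk diagram over the cubical presentation, and invoke the ladder/Greendlinger classification together with the $B(8)$ condition to keep the diagram subordinate to $W$. The approach is the right one.

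As written, however, this is a plan rather than a proof, and the two gaps sit exactly where the real work lives. First, the reduction to (a) scale-$1$ coarse connectivity plus (b) quasiconvexity does not quite yield the lower bound: nearest-point projections of consecutive vertices of $\gamma$ to $N(W)$ are at distance up to $2R+1$, and a geodesic between them need not meet $N(W)$, so scale-$1$ connectivity does not let you join them inside $N(W)$; you need coarse connectivity at scale comparable to $R$, or better, you should extract the length bound directly from the ladder, whose length is comparable to $\length{\gamma}$. Relatedly, ``hyperplane carriers are convex in $\widetilde{X^*}$'' is not free: the cubical part of $\widetilde{X^*}$ is not CAT(0), and (quasi)convexity of carriers there is itself one of the small-cancellation conclusions you would have to cite or establish. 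Second, and more seriously, you correctly isolate the central difficulty --- that excising spurs and shells keeps $\delta$ in $N(W)$ and that the terminal ladder is subordinate to $W$ --- but you only assert that Condition~\ref{Def:B6}\eqref{B6:homotopy} handles it. What is actually needed is the dichotomy: a shell along $\delta$ is a cone-cell $Y_j$ whose outerpath is a concatenation of boundedly many pieces lying on carriers of hyperplanes of $W$ and on cones crossed by $W$, and one must show that either $W$ crosses $Y_j$ (so, via Lemma~\ref{lem:wall cone intersection}, the innerpath can be rehomotoped into $N(W)$ at uniformly bounded cost) or the $8$-piece path-homotopy condition is violated. Until that step and the accompanying bookkeeping (each reduction lengthens $\delta$ by at most a uniform additive constant, so the constants are independent of $W$) are written out, the argument is incomplete; this is precisely the content of the proof in \cite{WiseIsraelHierarchy} that the paper chose to cite rather than reproduce.
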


We will need the following result of Hruska which is proven in \cite[Thm~1.5]{HruskaRelQC}:
\begin{thm}\label{thm:qiembedded implies rqc}
Let $G$ be a f.g.\ group that is hyperbolic relative to $\{G_i\}$.
Let $H\subset G$ be a f.g.\ subgroup that is quasi-isometrically embedded.
Then $H\subset G$ is relatively quasiconvex.
\end{thm}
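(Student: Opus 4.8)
The plan is to verify Hruska's cusped-space characterization of relative quasiconvexity directly; the statement is of course due to Hruska, and what follows is a sketch of the idea. Since $G$ is finitely generated and hyperbolic relative to $\{G_i\}$, each $G_i$ is finitely generated, so one may form the Groves--Manning cusped space $X = X(G,\{G_i\},S)$: this is the Cayley graph $\Gamma = \Gamma(G,S)$ with a combinatorial horoball $\mathcal H(gG_i)$ glued along each left coset $gG_i$, and relative hyperbolicity is equivalent to $X$ being $\delta$-hyperbolic. One of the equivalent definitions of relative quasiconvexity is: there is a constant $\kappa$ so that every geodesic of $X$ with both endpoints in $H\subseteq\Gamma\subseteq X$ has all of its vertices lying in $G$ within word-metric distance $\kappa$ of $H$. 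The goal is to produce such a $\kappa$; once this is known, the relative hyperbolicity of $H$ relative to $\{H\cap gG_ig^{-1}\}$ and the full package of relative quasiconvexity follow from Hruska's equivalences.

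For the main construction, fix a finite generating set $S_H$ of $H$. By the quasi-isometric embedding hypothesis (undistortion of $H$ in $G$) there are constants $\lambda,\epsilon$ so that every $S_H$-geodesic of $H$, viewed as a path in $\Gamma$, is a $(\lambda,\epsilon)$-quasigeodesic of $\Gamma$ all of whose vertices lie in $H$. Given $h_1,h_2\in H$, take such a path $\sigma$ and an $X$-geodesic $\gamma$ between the same endpoints. Now $\sigma$ need not be a quasigeodesic of $X$, since a long excursion of $\sigma$ through a single coset $gG_i$ can be shortcut through the horoball $\mathcal H(gG_i)$. The fix is to ``lift $\sigma$ to $X$'': first pre-process $\sigma$ to a non-backtracking path $\sigma'$ by replacing, for each coset $gG_i$ that $\sigma$ penetrates, the portion of $\sigma$ between its first entry into $gG_i$ and its last exit by a path inside $H\cap gG_i$ — this keeps all vertices in $H$ — and then replace each deep coset-portion of $\sigma'$ by the efficient up--over--down path in the corresponding combinatorial horoball. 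This should produce a $(\lambda',\epsilon')$-quasigeodesic $\widehat\sigma$ of $X$ from $h_1$ to $h_2$, with $\lambda',\epsilon'$ depending only on $\lambda,\epsilon,\delta$, and with the feature that every vertex of $\widehat\sigma$ lying in $G$ is a vertex of $\sigma'$, hence lies in $H$. The stability of quasigeodesics (the Morse lemma) in the $\delta$-hyperbolic space $X$ then puts $\gamma$ in a bounded $X$-neighborhood of $\widehat\sigma$.

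The hard part is the final step: $X$-closeness of a $G$-vertex $v$ of $\gamma$ to $\widehat\sigma$ does not by itself bound $d_S(v,H)$, since two elements of a common coset can be $X$-close through the horoball while being arbitrarily far apart in the word metric. To finish, one splits the $G$-vertices of $\gamma$ into those lying on a maximal subpath of $\gamma$ staying in $\Gamma$ and those that are entry/exit vertices of a maximal excursion of $\gamma$ into some horoball $\mathcal H(gG_i)$, and in each case invokes a Bounded Coset Penetration argument — most cleanly by projecting both $\gamma$ and $\widehat\sigma$ to the coned-off Cayley graph $\widehat\Gamma$, where they become non-backtracking quasigeodesics with common endpoints — to bound the word-metric distance from $v$ to a $G$-vertex of $\widehat\sigma$, and hence to $H$. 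I expect the genuine technical obstacles to be exactly (i) making precise that the pre-processing to $\sigma'$ can be carried out keeping all vertices in $H$ while actually eliminating coset-backtracking, and (ii) the bookkeeping in the Bounded Coset Penetration step matching horoball excursions of $\gamma$ with excursions of $\widehat\sigma$; these are precisely the places where Hruska's machinery does the real work.
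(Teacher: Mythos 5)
This theorem is not proved in the paper at all: it is quoted verbatim from Hruska \cite[Thm~1.5]{HruskaRelQC}, so there is no in-paper argument to compare yours against, and the paper's logic does not require one. What you have written is therefore judged as a standalone proof attempt, and as such it is an outline with genuine gaps rather than a proof. The two places you yourself flag as ``where Hruska's machinery does the real work'' are exactly the mathematical content of the theorem, and they are not routine. First, the pre-processing step is not well-defined as stated: you propose to replace the segment of $\sigma$ between the first entry into and last exit from a coset $gG_i$ by ``a path inside $H\cap gG_i$,'' but $H\cap gG_i$ need not be connected in the Cayley graph $\Gamma(G,S)$ (consecutive vertices of a $\Gamma$-path differ by elements of $S$, which need not preserve $H\cap gG_i$), and even granting existence of such a path you have no length control on it, so you cannot conclude that $\sigma'$ is still a uniform $\Gamma$-quasigeodesic. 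Moreover ``the cosets that $\sigma$ penetrates'' is a notion borrowed from paths in the coned-off Cayley graph; for a path in $\Gamma$ the penetrated cosets can be numerous, overlapping and nested, and the replacements can create new penetrations, so the claim that finitely many replacements yield a non-backtracking path needs an actual induction with a decreasing complexity.

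Second, and more seriously, the assertion that the resulting path $\widehat\sigma$ is a $(\lambda',\epsilon')$-quasigeodesic of the cusped space $X$ with constants depending only on $(\lambda,\epsilon,\delta)$ is the heart of the theorem and is simply asserted (``This should produce\dots''). A $(\lambda,\epsilon)$-quasigeodesic of $\Gamma$ can fail badly to be a quasigeodesic of $X$, and the fact that inserting up--over--down horoball shortcuts repairs this requires the quantitative comparison between $d_\Gamma$, the relative metric, and $d_X$ (distances inside a horoball are logarithmic in the $\Gamma$-distance between the entry and exit points), together with a bounded coset penetration argument to show $\widehat\sigma$ does not waste length between distinct horoball excursions. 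Likewise the final step converting $X$-closeness of a depth-zero vertex of $\gamma$ to $d_S$-closeness to $H$ is precisely the BCP bookkeeping you defer. In short, the skeleton is the right one (it is essentially the cusped-space route one finds in Hruska's paper), but every load-bearing step is left to ``Hruska's machinery,'' so the proposal does not constitute an independent proof; if you intend to cite Hruska for those steps, you may as well cite the theorem itself, as the paper does.
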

%\begin{com}K: relative quasiconvexity not defined.
%\end{com}

\section{Relative Cocompactness}
\label{sec:relative cocompactness}

The following is a simplified restatement of \cite[Thm~7.12]{HruskaWiseAxioms}
in the case $\heartsuit=\star$. We focus it on our application where the wallspace arises from a cubical presentation.
\newcommand{\neb}{\ensuremath{\mathcal N}}
We use the notation $\neb_d(S)$ for the closed $d$-neighborhood of $S$.
\footnote{There is a small misstatement in \cite[Thm~7.12]{HruskaWiseAxioms}, as it requires
that $r\geq r_0$  for some constant $r_0$.}

\begin{thm}
\label{thm:RelCocompactWallspace}
Consider the wallspace $(\widetilde{X^*},\mathcal W)$.
Suppose $G$ acts properly and cocompactly on the cubical part of $\widetilde{X^*}$ preserving both its metric and wallspace structures,
and the action on $\mathcal W$ has only finitely many $G$--orbits of walls.
Suppose $\stab(W)$ is relatively quasiconvex and acts cocompactly on $W$ for each wall $W\in \mathcal W$.
Suppose $G$ is hyperbolic relative to $\{G_1,\ldots, G_r\}$.
For each $G_i$ let $\widetilde X_i \subset \widetilde{X^*}$ be a
nonempty $G_i$--invariant $G_i$--cocompact subspace.
Let $C(\widetilde{X^*})$ be the cube complex dual
 to $(\widetilde{X^*},\mathcal W)$ and for each $i$ let $C(\widetilde X_i)$
be the cube complex dual to $(\widetilde{X^*},\mathcal W_i)$
where $\mathcal W_i$ consists of all walls $W$ with the property that
$\diam \big(W\cap \neb_d(\widetilde X_i)\big) = \infty$ for some $d=d(W)$.

Then there exists a compact subcomplex $K$ such that
 $C(\widetilde{X^*})= GK \cup \bigcup_i GC(\widetilde X_i)$.
Hence $G$ acts cocompactly on $C(\widetilde{X^*})$ provided that each $C(\widetilde X_i)$ is $G_i$-cocompact.
\end{thm}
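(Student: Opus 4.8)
The final sentence is immediate from the displayed decomposition: $GK$ is $G$--cocompact, each $GC(\widetilde X_i)$ is $G$--cocompact as soon as $C(\widetilde X_i)$ is $G_i$--cocompact, and a finite union of $G$--cocompact subcomplexes is $G$--cocompact. So the content is the equality $C(\widetilde{X^*})=GK\cup\bigcup_iGC(\widetilde X_i)$, and I would proceed as follows.

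First I would reduce to counting $0$--cubes. Recall that the $0$--cubes of $C(\widetilde{X^*})$ are the consistent orientations of $\mathcal W$ satisfying the descending chain condition, that adjacent $0$--cubes differ on a single wall, and that a cube is spanned by a set of pairwise crossing walls. Since $G$ acts properly and cocompactly on the cubical part preserving the metric and the wallspace, and by Lemma~\ref{lem:wallsQIembed} walls quasi--isometrically embed with $W$--cocompact stabilizers, there is a uniform bound on the number of walls meeting a bounded set; hence $C(\widetilde{X^*})$ is locally finite and finite dimensional. It is then enough to produce a compact set $K_0$ of $0$--cubes and a constant $R$ so that every $0$--cube either lies within $C$--distance $R$ of $GK_0$ or lies in $\bigcup_iGC(\widetilde X_i)$; taking $K$ to be the union of all cubes meeting $\neb_R(K_0)$ together with a bounded neighbourhood of the cusp mouths, local finiteness and the fact that distinct translates $g\widetilde X_i$ coarsely intersect in bounded sets let one upgrade this to the stated equality for all cubes.

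Fix a cubical basepoint $p$ with principal $0$--cube $v_p$. For a $0$--cube $v$ let $\Delta(v)$ be the set of walls on which $v$ and $v_p$ disagree and $\Delta_i(v)=\Delta(v)\cap\mathcal W_i$. The crux is a dichotomy governed by a threshold $M$ extracted from the relatively hyperbolic geometry (bounded coset penetration, applied to a relative geodesic from $p$ to a cubical point nearly closest to $v$, together with the bound on walls through a region). If $|\Delta_i(v)|\le M$ for all $i$, then such a relative geodesic penetrates every horoball boundedly, so it behaves like a genuine geodesic in a hyperbolic space; the standard argument that a cocompact cubulation of a hyperbolic group yields a cocompact action on the dual complex---using again the quasi--isometric embedding of walls and cocompactness of their stabilizers---bounds $\dist_C(v,v_q)\le R$ for a cubical point $q$ and a uniform $R$, and cocompactness of $G$ on the cubical part puts $v_q$ in the $G$--orbit of one of finitely many fixed principal $0$--cubes, which we take as $K_0$. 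If instead $|\Delta_i(v)|>M$ for some $i$, then $v$ ``enters the $i$--cusp'': the many walls of $\mathcal W_i$ separating $v$ from $v_p$ run, after bounded initial segments, into a single $G$--translate $g\neb_d(\widetilde X_i)$, and $v$ then agrees with a $0$--cube of $gC(\widetilde X_i)$ on every wall outside $\mathcal W_i$, i.e.\ $v\in GC(\widetilde X_i)$.

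The main obstacle is this last step: showing \emph{uniformly} that a $0$--cube with sufficiently many separating walls deep in cusp $i$ is \emph{exactly} a $0$--cube of a single translate of $C(\widetilde X_i)$---equivalently, that its orientation on every shallow wall is forced to be the canonical one pointing towards that cusp. This combines relative quasiconvexity of the walls (a wall of $\mathcal W_i$ enters the cusp only in a uniformly conical way), $G_i$--cocompactness of $\widetilde X_i$ and cocompactness of wall stabilizers (so that ``entering cusp $i$'' is witnessed by a group element pinning down $\widetilde X_i$), and the combinatorial horoball geometry of $G$; it is the substance of \cite[Thm~7.12]{HruskaWiseAxioms}, and the hypotheses here are exactly what is needed to run that argument.
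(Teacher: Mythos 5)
You should first note that the paper does not prove this theorem at all: it is presented as ``a simplified restatement of \cite[Thm~7.12]{HruskaWiseAxioms} in the case $\heartsuit=\star$,'' so the paper's entire ``proof'' is the citation to Hruska--Wise. Your proposal is therefore not an alternative to anything in this paper; it is an attempted reconstruction of the cited argument. As such, it gets the architecture right: reduce to $0$--cubes via local finiteness of the wallspace, fix a basepoint $0$--cube, and establish a dichotomy in which a $0$--cube either disagrees with the basepoint on boundedly many walls of each $\mathcal W_i$ (and is then uniformly close to a canonical $0$--cube of a cubical point, hence in $GK$) or disagrees on many walls of some $\mathcal W_i$ (and is then forced into a translate of $C(\widetilde X_i)$). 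This is indeed the shape of the Hruska--Wise relative cocompactness argument.

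However, as a proof it has a genuine gap, and you identify it yourself: the entire difficulty is the second horn of the dichotomy, namely that a $0$--cube separated from the basepoint by more than $M$ walls of $\mathcal W_i$ has its orientation on \emph{every} wall outside (a translate of) $\mathcal W_i$ forced to the canonical one, so that it lies in the image of $gC(\widetilde X_i)$ under the canonical embedding $C(\widetilde X_i)\hookrightarrow C(\widetilde{X^*})$ (extending an orientation of $\mathcal W_i$ by orienting all other walls toward $\widetilde X_i$). You name the ingredients --- bounded coset penetration, relative quasiconvexity and cocompactness of wall stabilizers, coarse malnormality of the translates $g\widetilde X_i$ --- but you do not carry out the argument; you defer it to \cite[Thm~7.12]{HruskaWiseAxioms}. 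That is an honest and accurate outline, but it means your proposal is a roadmap to the citation rather than a proof. Since the paper's stated intent is also just to quote that theorem, the cleanest resolution is to keep the citation; if you want a self-contained argument you must actually prove the forcing of orientations on shallow walls, which is several pages of work in the source.
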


In our application of Theorem~\ref{thm:RelCocompactWallspace}, 
$X$ is a ``long'' wedge of cube complexes $X_1, \dots, X_r$ (see Construction~\ref{construction} for the definition) 
and $\widetilde X_i$ is a lift of the universal cover of $X_i$ to $\widetilde{X^*}$. 
The wallspace structure of $X^*$ is described in Section~\ref{sec:wallspace structure} 
(see also Lemma~\ref{lem:still B6 with peripheries}). 
We will be able to apply Theorem~\ref{thm:RelCocompactWallspace} 
because the cube complex $C_\star(\widetilde X_i)$
will be $G_i$-cocompact for the following reason:

\begin{lem}\label{lem:no extra big walls}
Let $G$, $(X^*,\mathcal W)$ be as in Theorem~\ref{thm:RelCocompactWallspace} and suppose that $X$ satisfies $C'(\frac1 {16})$. 
Additionally assume that each $\widetilde X_i$ has the property that if $s$ is a square with an edge in $\widetilde X_i$ then $s\subset \widetilde X_i$.
Let $W$ be a wall of $\widetilde{X^*}$.
Suppose $\diam(W\cap \neb_d(\widetilde X_i))=\infty$ for some $i,d$.
Then $W$ contains a hyperplane of $\widetilde X_i$. 
Hence $C_\star(\widetilde X_i) = \widetilde X_i$ for each $i$.
\end{lem}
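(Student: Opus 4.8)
The plan is to analyze a wall $W$ with $\diam\big(W\cap\neb_d(\widetilde X_i)\big)=\infty$ and show that the only way to accumulate infinite diameter inside a bounded neighborhood of $\widetilde X_i$ is for $W$ to actually contain a hyperplane dual to an edge of $\widetilde X_i$. Recall that $W$ is a union of hyperplanes $H_1, H_2, \dots$ where consecutive ones are cone-equivalent, meaning $H_j\cap Y$ and $H_{j+1}\cap Y$ lie in a common wall of some cone-lift $Y$; between consecutive cone-crossings the wall travels along a single hyperplane and its carrier. Since pieces have uniformly bounded diameter (from $C'(\frac1{16})$ and compactness of the $Y_i$), Lemma~\ref{lem:wallsQIembed} gives that $N(W)\to\widetilde{X^*}$ is a quasi-isometric embedding; combined with $W$ being quasi-isometrically embedded this means a subsegment of $W$ of large diameter in $\widetilde X^*$ corresponds to a long subsegment of $W$ in its intrinsic metric, hence one that crosses many hyperplanes of $W$ and passes through many cones.

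First I would argue by a pigeonhole/Arzelà–Ascoli type argument along $W$: because $W\cap\neb_d(\widetilde X_i)$ has infinite diameter but $G_i$ acts cocompactly on $\widetilde X_i$, we can find a long portion of $W$ fellow-traveling $\widetilde X_i$, and since this portion is long in $W$'s intrinsic metric it contains either a long sub-hyperplane $H_j$ staying $d$-close to $\widetilde X_i$, or it threads through a cone-lift $Y$ whose intersection with $\neb_d(\widetilde X_i)$ is large. The second possibility I would rule out using the small-cancellation hypothesis: $Y\cap\neb_d(\widetilde X_i)$, when $Y$ is not itself inside $\widetilde X_i$, is essentially a wall-piece or a bounded-diameter intersection — here is where the assumption that $\widetilde X_i$ is a full subcomplex (any square with an edge in $\widetilde X_i$ lies in $\widetilde X_i$) is essential, since it forces any hyperplane meeting $\widetilde X_i$ in more than a point to be a hyperplane of $\widetilde X_i$, so a cone $Y\not\subset\widetilde X_i$ can only graze $\widetilde X_i$ in a piece-sized set. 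Therefore the only surviving possibility is a single hyperplane $H_j\subset W$ that stays within distance $d$ of $\widetilde X_i$ over arbitrarily long stretches.

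Next I would upgrade "$H_j$ stays $d$-close to $\widetilde X_i$ on a long stretch" to "$H_j$ is a hyperplane of $\widetilde X_i$." A hyperplane is determined by its carrier, and if the carrier $N(H_j)$ has an edge within the $d$-neighborhood of $\widetilde X_i$ for a long stretch, then — using again fullness of $\widetilde X_i$ and the bound $\diam N(U)<\frac1{16}\systole{Y}$ on the wall-piece side — the carrier must actually meet $\widetilde X_i$; since $\widetilde X_i$ is full and $\widetilde X_i$ is itself a CAT(0) cube complex, the hyperplane dual to that shared edge is entirely contained in $\widetilde X_i$, and by connectedness of hyperplanes this hyperplane equals $H_j$. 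Thus $W$ contains a hyperplane of $\widetilde X_i$, which is the first conclusion.

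Finally, the statement $C_\star(\widetilde X_i)=\widetilde X_i$ follows by comparing wallspaces: $C_\star(\widetilde X_i)$ is dual to $(\widetilde{X^*},\mathcal W_i)$ where $\mathcal W_i$ is exactly the set of walls $W$ with $\diam\big(W\cap\neb_d(\widetilde X_i)\big)=\infty$, and we have just shown each such $W$ contains a hyperplane of $\widetilde X_i$. Conversely every hyperplane $H$ of $\widetilde X_i$ extends to a wall $W$ of $\widetilde X^*$, and since $\widetilde X_i$ is full and $G_i$-cocompact, $W\cap\widetilde X_i$ has infinite diameter, so $W\in\mathcal W_i$. Restricting such walls to $\widetilde X_i$ recovers precisely the hyperplane wallspace of the CAT(0) cube complex $\widetilde X_i$, whose dual cube complex is $\widetilde X_i$ itself (Sageev's reconstruction). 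Hence $C_\star(\widetilde X_i)=\widetilde X_i$.

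The main obstacle I expect is the middle step: cleanly ruling out the "wall threads through a cone $Y\not\subset\widetilde X_i$ accumulating infinite diameter near $\widetilde X_i$" scenario. This requires carefully identifying $Y\cap\neb_d(\widetilde X_i)$ as (contained in a bounded neighborhood of) a wall-piece or cone-piece so that the $C'(\frac1{16})$ bound and the fullness hypothesis on $\widetilde X_i$ can be brought to bear, and keeping track of the quasi-isometry constants from Lemma~\ref{lem:wallsQIembed} so that "long in $\widetilde X^*$" genuinely forces "long in $W$." Everything else is bookkeeping with carriers and the definition of cone-equivalence.
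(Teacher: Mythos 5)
Your overall strategy -- reduce to showing that a wall whose intersection with a neighborhood of $\widetilde X_i$ has infinite diameter must genuinely meet $\widetilde X_i$ in a hyperplane, with fullness of $\widetilde X_i$ doing the work of promoting contact to containment -- is the right one, and your closing paragraph on $C_\star(\widetilde X_i)=\widetilde X_i$ is fine. But the central step is a genuine gap, and you correctly sense it yourself: nothing in your argument actually converts ``a long stretch of $W$ stays within distance $d$ of $\widetilde X_i$'' into ``the carrier of some hyperplane of $W$ meets $\widetilde X_i$.'' Fellow-traveling a convex subcomplex at distance $d$ for arbitrarily long stretches does not by itself force intersection, and neither the quasi-isometric embedding of $N(W)$ nor the piece bounds close this. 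The paper's mechanism is different and is exactly what you are missing: cocompactness of $\stab(W)$ on $N(W)$ and of $G_i$ on $\widetilde X_i$ produces an infinite-order element $g$ stabilizing both; one then takes a $g$-periodic geodesic $\tilde\gamma$ in $\widetilde X_i$ (using convexity of $\widetilde X_i$) and a $g$-periodic path $\tilde\lambda$ in $N(W)$, and forms a minimal-complexity annular diagram $A$ between their quotients. The cubical small-cancellation machinery (tight innerpaths at $C'(\frac1{16})$) makes $A$ a square annular diagram, and the fullness hypothesis plus minimality forces $A$ to have no squares at all, so $\gamma=A=\lambda$ -- i.e., the path in $N(W)$ literally lies in $\widetilde X_i$. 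That collapse is the step your proposal replaces with an unproven assertion (``the carrier must actually meet $\widetilde X_i$'').

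The subsequent dichotomy also needs the collapsed annulus to be stated precisely: either $\tilde\lambda$ lies in a single hyperplane carrier $N(U)$, in which case fullness gives $N(U)\subset\widetilde X_i$ and you are done; or $\tilde\lambda$ passes through a cone $Y_j$ between carriers of two distinct hyperplanes $U_1,U_2$ of $W$ meeting $Y_j$ in the same wall, and then the single piece-path $\widetilde X_i\cap Y_j$ joining $N(U_1)$ to $N(U_2)$ violates condition \eqref{B6:homotopy} of $B(8)$. Your ``a cone not inside $\widetilde X_i$ can only graze it in a piece-sized set'' points at the right object but does not extract the contradiction; the contradiction is with the $B(8)$ homotopy condition for the wall of $Y_j$, not merely with a diameter bound. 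In short: right target, but the argument needs the common stabilizer, the periodic annular diagram, and its collapse -- without these the proof does not go through.
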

\begin{proof}
%Each $\widetilde X_i$ has the property for each $d\geq0, \lambda\geq 1,\epsilon
%\geq 0$ there exists $d'$ such that if $\alpha$ is a $(\lambda,\epsilon)$-quasigeodesic that starts and ends at points
%in $\neb_d(\widetilde X_i)$ then $\alpha\subset \neb_{d'}(\widetilde X_i)$.
%This follows from the analogous statement for parabolic subgroups of a relatively hyperbolic group
%which can be deduced from \cite[Thm 1.12.(1)]{DrutuSapir2005}.
%\begin{com}
%Perhaps not the first historical reference.
%It is hidden within the definitions of saturations. (There is an explicit  relatively thin triangle statement elsewhere.)
%https://arxiv.org/pdf/math/0405030.pdf
%The point is that a quasi-geodesic in a saturation of $K$ cannot venture too far out into any of the added parabolics.
%\end{com}
%
%The quasigeodesics we consider later in the proof are contained in $N(W)$ which is quasi-isometrically embedded
%by Lemma~\ref{lem:wallsQIembed}, so there is a uniform value of $d'$ for all such quasigeodesics.

%By the fellow-travelling property above, we see that 

Suppose $\diam(N(W)\cap \neb_d(\widetilde X_i))=\infty$.
By cocompactness of the action $\stab(W)$ on $N(W)$ and $G_i$ on $\widetilde X_i$
there is an infinite order element $g$ stabilizing both $W$ and $\widetilde X_i$.
\begin{com}
Since there are finitely orbits of $0$-cube in each action, there exists a pair of points in the same pair of orbits.
\end{com}

%Suppose that $\diam(N(W)\cap \neb_d(\widetilde X_i))=\infty$. 
%By cocompactness there must exists an element $g\in G$ and $x\in N(W)\cap \neb_d(\widetilde X_i)$ such that $gx\in N(W)\cap \neb_d(\widetilde X_i)$. 
%By the fellow-travelling property 
%
Each $\widetilde X_i \subset \widetilde{X^*}$ is convex by \cite[Lem~3.74]{WiseIsraelHierarchy},
and we may therefore choose a geodesic $\tilde \gamma$ in $\widetilde X_i$ that is stabilized by $g$,
and let $\tilde \lambda$ be a path in $N(W)$ that is stabilized by $g$.
We thus obtain an annular diagram $A$ between closed paths $\gamma$ and $\lambda$
which are the quotients of $\tilde \gamma$ and $\tilde \lambda$ by $\langle g\rangle$.
Suppose moreover that $A$ has minimal complexity among all such choices $(A,\gamma,\lambda)$
where $\gamma\rightarrow X_i$ has the property that $\widetilde \gamma$ is a geodesic,
and $\lambda\rightarrow N(W)$ is a closed path.
By
    \cite[Thm~5.61]{WiseIsraelHierarchy},
  $A$ is a square annular diagram, and we may assume it is has no spur.
 Note that \cite[Thm~5.61]{WiseIsraelHierarchy} requires ``tight innerpaths'' which holds at $C'(\frac{1}{16})$ by \cite[Lem~3.70]{WiseIsraelHierarchy}.

Observe that if $s$ is a square with an edge in $\widetilde X_i$,
then $s\subset \widetilde X_i$.
Consequently, the minimality of $A$ ensures that $A$ has no square,
and so $\gamma = A = \lambda$.

There are now two cases to consider:
Either $\tilde \lambda \subset N(U)$ for some hyperplane $U$ of $W$,
or $\tilde \lambda$ has a subpath $u_1y_ju_2$ traveling along $N(U_1), Y_j, N(U_2)$,
where $U_1, U_2$ are distinct hyperplanes of $W$, and $U_1,U_2$ intersect the cone $Y_j$ in antipodal hyperplanes.

In the latter possibility the $B(8)$ condition is contradicted for $Y_j$, since $\widetilde X_i \cap Y_j$
contains  the single piece-path $y_j$ which starts and ends on carriers of distinct hyperplanes of the same wall of $Y_j$.

In the former possibility, $N(U) \cap\widetilde X_i\neq\emptyset$,
and so the above square observation ensures that $N(U)\subset \widetilde X_i$.
Hence $W$  intersects $\widetilde X_i$ as claimed.
\end{proof}

\begin{exmp}
Consider the quotient: $G=\integers^2*\integers^2/\nclose{w_1,w_2}$,
with the following presentation for some number $m>0$:
$$\left\langle \ \langle a,b\mid aba^{-1}b^{-1}\rangle \ * \ \langle c,d \mid cdc^{-1}d^{-1}\rangle  \ \Big| \  a^1c^1a^2c^2\cdots a^{m}c^{m}, \,b^1d^1b^2d^2\cdots b^{m}d^{m}\right\rangle$$
Note that each piece consists of at
most 2 syllables, whereas the syllable length (see Definition~\ref{defn:small can free prod}) of each relator is $2m$.
Hence the $C'_{*}(\frac{1}{m-1})$ small-cancellation condition over free products is satisfied.
See Definition~\ref{defn:small can free prod}.

The associated space $X$ is the long wedge (see Construction~\ref{construction}) of two tori $X_1, X_2$ corresponding to $\langle a,b\rangle$ and $\langle c,d\rangle$.
For $i\in\{1,2\}$, let $Y_i$ be a square complex built out from an alternating sequence of rectangles and arcs as in Figure~\ref{fig:Necklace}.

The cube complex dual to $\widetilde{X^*}$ has $\frac{m(m+1)}{2}$-dimensional cubes
 arising from the cone-cells $Y_1$ and $Y_2$. More interestingly, the cube complex dual to $(\widetilde{X^*}, \mathcal W_1)$
\begin{com}
(We use THE SAME SET but only some of the halfspaces.  That explains how it embeds...)
\end{com}
where $\mathcal W_1$ consists of the walls intersecting a copy of $\widetilde X_1$, has dimension~$2m$.
This is because all hyperplanes dual to the path $a^m$ cross each other because of $Y_1$
 and likewise all hyperplanes dual to the path $b^m$ cross each other because of $Y_2$,
 and every hyperplane dual to the path $a^m$ crosses every hyperplane dual to the path $b^m$
 because $\widetilde X_1$ is a 2-flat.
\end{exmp}
\begin{figure}\centering
\includegraphics[width=.25\textwidth]{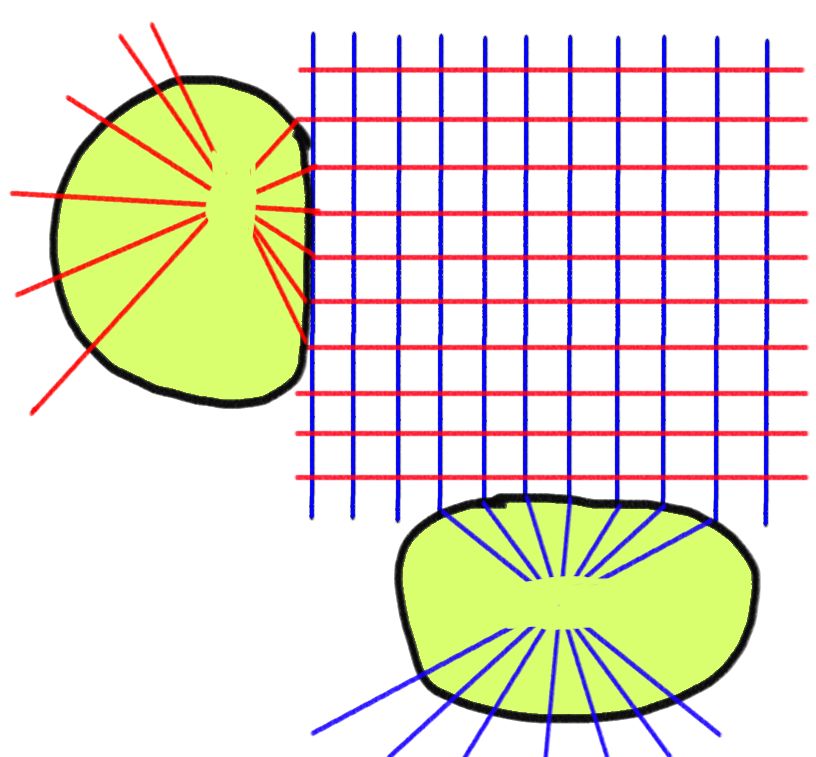}
\caption{ \label{fig:IncreasingDimension}
The walls associated to a 13-cube in the cubulation of a flat.
}
\end{figure}

\section{Small cancellation over free products}\label{sec:small can free prod}
\begin{defn}[The $C_*'(\frac 1 n)$ small cancellation over a free product]\label{defn:small can free prod}
Every element $R$ in the free product $G_1*\cdots *G_r$ has a unique \emph{normal form} which is a word $h_1\cdots h_n$ where each $h_i$ lies in a factor of the free product and $h_i$ and $h_{i+1}$ lie in different factors for $i=1,\dots, n-1$.
The number $n$, which we denote by $|R|_{*}$, is the \emph{syllable length} of $R$.
We say $R$ is \emph{cyclically reduced} if $h_1$ and $h_{n}$ also lie in different factors. We say that $R$ is \emph{weakly cyclically reduced} if $h_n^{-1}\neq h_1$ or if $|R|_{*}\leq 1$.
 We refer to each $h_i$ as a \emph{syllable}.  There is a \emph{cancellation} in the concatenation $P\cdot U$ of two normal forms if the last syllable of $P$ is the inverse of the first syllable of $U$.

Consider a \emph{presentation over a free product} $\langle G_1*\cdots *G_r \mid R_1,\ldots, R_s\rangle$ where each $R_i$ is a cyclically reduced word in the free product.  A word $P$ is a \emph{piece} in $R_i,R_j$ if $R_i, R_j$ have weakly cyclically reduced conjugates $R_i', R_j'$ that can be written as concatenations $P\cdot U_i$ and $P\cdot U_j$ respectively with no cancellations.
 The presentation  is \emph{$C'_{*}(\frac {1}{n})$ small cancellation} if $|P|_{*}<
\frac {1}{n}|R_i'|_{*}$ whenever $P$ is a piece.
\end{defn}

If $G$ is a $C'_{*}(\frac 16)$ small-cancellation quotient of a free product $G_1*\cdots*G_r$ \cite[Cor. 9.4]{LS77}, then each factor $G_i$ embeds in $G$. In particular, $G$ is nontrivial if some $G_i$ is nontrivial.
We quote the following result from \cite{OsinBook06}:
\begin{lem}\label{lem:16relhyp}
Let $G$ be a quotient of $G_1*\cdots*G_r$
arising as a $C'_{*}(\frac{1}{6})$ small-cancellation presentation over a free product.
Then $G$ is hyperbolic relative to $\{G_1,\ldots, G_r\}$.
\end{lem}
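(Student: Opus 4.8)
The plan is to deduce the statement directly from the general machinery developed by Osin for relatively hyperbolic groups together with the small-cancellation theory over free products. First I would recall the setup: write $F = G_1 * \cdots * G_r$, which is trivially hyperbolic relative to the collection $\{G_1,\ldots,G_r\}$ (the Cayley graph with the $G_i$ coned off is a tree, hence hyperbolic, and the bounded coset penetration property holds vacuously). The group $G$ is the quotient $F/\nclose{R_1,\ldots,R_s}$, so it suffices to show that imposing the relators $R_i$ preserves relative hyperbolicity with the same peripheral structure, i.e.\ that the images of the $G_i$ remain the peripheral subgroups and the quotient is still relatively hyperbolic.

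The key step is to interpret the $C'_*(\tfrac16)$ condition as a classical (Dehn-function-controlled) small-cancellation condition \emph{relative to} the peripheral structure, in the sense of Osin's relative small-cancellation theory. Concretely, I would measure pieces in the relative metric on $F$ in which each $G_i$ has diameter zero; then the syllable length $|R|_*$ is, up to a bounded multiplicative error, the relative length of $R$, and a piece in the sense of Definition~\ref{defn:small can free prod} is a common relative-geodesic subpath of two relators. The hypothesis $|P|_* < \tfrac16|R_i'|_*$ is then precisely a $C'(\tfrac16)$ small-cancellation condition over the relatively hyperbolic group $F$. Osin's theorem on relatively hyperbolic Dehn filling / small-cancellation quotients (as presented in \cite{OsinBook06}) then gives that $G$ is hyperbolic relative to the images of $\{G_1,\ldots,G_r\}$; since by \cite[Cor.~9.4]{LS77} each $G_i$ embeds in $G$ under the $C'_*(\tfrac16)$ hypothesis, these images are honest copies of the $G_i$.

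An alternative, more self-contained route would be to build a Cayley-graph model directly: take the Cayley graph of $G$ with respect to the (infinite) generating set $\bigsqcup_i G_i$, cone off the cosets of the $G_i$, and verify the linear isoperimetric inequality and the bounded coset penetration property by a van Kampen diagram argument using Greendlinger's lemma for $C'_*(\tfrac16)$ presentations over free products (which is available in \cite{LS77}): a reduced diagram over the presentation either has no relator regions, in which case it lives in the tree of the free product, or it has a region meeting the boundary in more than half its length, which forces a controlled reduction. I would not carry this second argument out in detail, since the first route simply cites the needed statement.

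The main obstacle is bookkeeping around the translation between the two notions of ``piece'' and ``length'': one must check that the $C'_*$ condition, which is phrased combinatorially in terms of syllables and weakly cyclically reduced conjugates with no cancellations, matches the metric small-cancellation condition in Osin's framework up to the constants actually needed, and in particular that $\tfrac16$ on the syllable side is small enough to land inside the range where Osin's theorem applies. This is a routine but careful comparison; once it is in place, relative hyperbolicity of $G$ with peripheral structure $\{G_1,\ldots,G_r\}$ follows immediately.
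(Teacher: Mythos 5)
The paper does not prove this lemma at all: it is explicitly quoted as a result from \cite{OsinBook06}, with no argument supplied. Your primary route is exactly that same citation (the free product is relatively hyperbolic with respect to its factors, and the $C'_*(\tfrac16)$ condition places the quotient within the scope of Osin's relative small-cancellation machinery), so your proposal matches the paper's approach; the translation between syllable-length pieces and Osin's relative metric that you flag as "routine bookkeeping" is indeed the only content, and it is standard.
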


\subsection{Cubical presentation associated to a presentation over a free product}

\begin{construction}\label{construction}
Let $T_r$ be the union of directed edges $e_1,\dots, e_r$ identified at their initial vertices.
The \emph{long wedge} of a collection of spaces $X_1,\ldots, X_r$ is obtained from $T_r$
by gluing the basepoint of each $X_j$ to the terminal vertex of $e_j$ . We will later subdivide the edges of $T_r$. Given groups $G_1,\dots G_r$ such that for each $1\leq j \leq r$, let $G_j=\pi_1X_j$ where $X_j$ is a nonpositively curved cube complex, the long wedge $X$ of the collection $X_1,\dots, X_r$ is a cube complex with $\pi_1 X = G_1
*\dots*G_r$.

Given an element $R\in G_1*\dots*G_r$ with $|R|_{*}>1$, there exists a local isometry $Y\to X$ where $Y$ is a compact nonpositively curved cube complex with $\pi_1Y=\langle R\rangle$. Indeed, let $R = h_1h_2\cdots h_t$ where each $h_k$ is an element of some $G_{m(k)}$. For each $k$ let $V_k$ be the compact cube complex that is the combinatorial convex hull of the basepoint $p$ and its translate $h_kp$ in the universal cover $\widetilde X_{m(k)}$. We call $p$ the \emph{initial vertex} of $V_k$ and $h_kp$ the \emph{terminal vertex} of $V_k$. For each $1\leq k\leq t$ let $
\sigma_k$ be a copy of $e_{m(k)}^{-1}e_{m(k+1)}$ where $m(t+1) = m(1)$. Finally we form $Y$ from $\bigsqcup_{k=1}^t V_k$  and $\bigsqcup_{k=1}^{t} \sigma_k$ by gluing the terminal vertex of $V_{k}$ to the initial vertex of $\sigma_k$ and the terminal vertex of $\sigma_k$ to the initial vertex of $V_{k+1}$.
Note that there is an induced map $Y\to X$ which is a local isometry. See Figure~\ref{fig:Necklace}.

\begin{figure}\centering
\includegraphics[width=.7\textwidth]{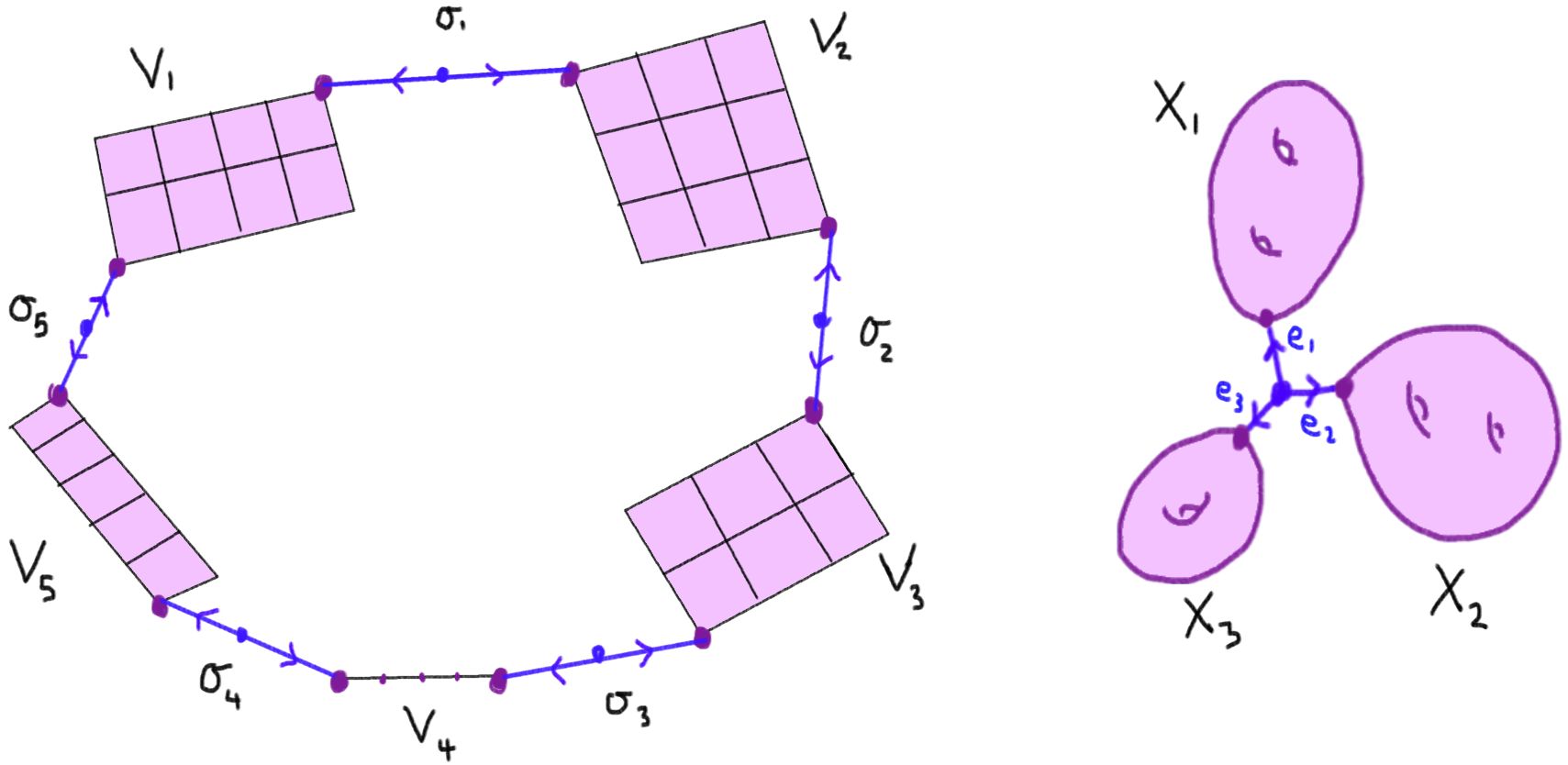}
\caption{ \label{fig:Necklace}
On the right is a long wedge of surfaces. 
On the left is a complex $Y$ mapping to $X$ by a local isometry. 
A relator of syllable length $n$ is represented by such a local isometry having $n$ rectangles.
}
\end{figure}

Given a presentation $\langle G_1,\ldots, G_r \mid R_1,\ldots, R_s\rangle$ over a free product there is an associated cubical presentation $X^*=\langle X\mid Y_1,\dots, Y_s\rangle$ where each $Y_i\to X$ is a local isometry associated to $R_i$ as above.
Finally, any subdivision of the edges $e_1,\dots, e_r$ induces a subdivision of $X$, and accordingly a subdivision of each $Y_i$. We thus obtain a new cubical presentation that we continue to denote by $X^*$.
\end{construction}

\begin{lem}\label{lem:still B6 with peripheries}
Suppose $\langle X\mid Y_1,\ldots, Y_s\rangle$ is $B(8)$ (after subdividing).
And let $\widetilde X_k$ be the universal cover of $X_k$ with the wallspace structure such that each hyperplane is a wall. 
Then $\langle X \mid Y_1,\ldots, Y_s, \widetilde X_1,\ldots, \widetilde X_r\rangle$, where the maps $\widetilde X_j\to X$ are the local isometries factoring as $\widetilde X_j\to X_j\to X$, is $B(8)$.
Moreover, the two wallspace structures can be chosen so that the walls of $\widetilde{X^*}$ induced by the two structures are identical.
\end{lem}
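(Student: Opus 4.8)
The plan is to verify the three clauses of Definition~\ref{Def:B6} for the enlarged cubical presentation $\langle X \mid Y_1,\ldots,Y_s,\widetilde X_1,\ldots,\widetilde X_r\rangle$, treating the new cones $\widetilde X_j$ as ``peripheral'' relators whose wallspace is the discrete one in which every hyperplane is its own wall. Clause~\eqref{B6:wallspace} is immediate for the new cones: distinct hyperplanes of $\widetilde X_j$ lie in distinct singleton classes, so the no-crossing/no-osculation requirement is vacuous, and each hyperplane of the CAT(0) cube complex $\widetilde X_j$ separates $\widetilde X_j$ into two halfspaces. Clause~\eqref{B6:aut} holds because $\Aut(\widetilde X_j \to X)$ acts by cubical automorphisms and therefore permutes the hyperplanes, which is exactly the (trivial) wallspace structure. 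So the content is in clause~\eqref{B6:homotopy}, the piece-path homotopy condition, and we must check it both for the old cones $Y_i$ (where the hyperplane structure is unchanged but the collection of pieces may have grown, since a component of $\widetilde Y_i \cap \widetilde X_j$ is now a cone-piece) and for the new cones $\widetilde X_j$.

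For the new cones $\widetilde X_j$: a piece-path in $\widetilde X_j$ lies in a single piece, and a piece of $\widetilde X_j$ is a component of either $\widetilde X_j \cap \widetilde Y_i$, or $\widetilde X_j \cap \widetilde X_k$ with $k\neq j$, or $\widetilde X_j \cap N(\widetilde V)$ for a hyperplane $\widetilde V$ disjoint from $\widetilde X_j$. The key geometric input is the hypothesis (available here because $X$ is a long wedge and $\widetilde X_j$ is a lift of the universal cover of a factor, exactly as in Lemma~\ref{lem:no extra big walls}) that $\widetilde X_j$ is full: if a square has an edge in $\widetilde X_j$ then the whole square lies in $\widetilde X_j$; in particular $\widetilde X_j$ is convex in $\widetilde{X^*}$ by \cite[Lem~3.74]{WiseIsraelHierarchy}. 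Given a concatenation $P$ of at most $8$ piece-paths starting and ending on $N(U)$ for a hyperplane $U$ of $\widetilde X_j$, I would argue that $P$ may be taken to lie in $\widetilde X_j$ itself (fullness forces any square of a piece-path meeting $\widetilde X_j$ in an edge to lie in $\widetilde X_j$, and one pushes the path across), and then within the CAT(0) cube complex $\widetilde X_j$ any path with endpoints on the carrier of a single hyperplane $U$ is path-homotopic into $N(U)$ because $N(U)\cong U\times[0,1]$ is convex and the complement components are convex — so the homotopy is automatic with no constraint on the number of piece-paths. This is the step I expect to be the main obstacle: making precise that an arbitrary piece-path touching $\widetilde X_j$ only at its endpoints (it could wander through cones $Y_i$ or along carriers of hyperplanes disjoint from $\widetilde X_j$) still produces a homotopy into $N(U)$; the resolution is that $U$ is a hyperplane of $\widetilde X_j$, so $N(U)\subset \widetilde X_j$, and a piece-path from $N(U)$ to $N(U)$ whose interior leaves $\widetilde X_j$ would, by fullness and convexity, have to be homotopic rel endpoints back into $\widetilde X_j$, hence into $N(U)$.

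For the old cones $Y_i$: the wallspace on $Y_i$ is unchanged, so clauses~\eqref{B6:wallspace} and~\eqref{B6:aut} still hold; only \eqref{B6:homotopy} needs rechecking because a piece-path in $Y_i$ can now also be a sub-path of a component of $\widetilde Y_i \cap \widetilde X_j$. But such a component is a convex subcomplex of the convex subcomplex $\widetilde X_j$, and the diameter bound is governed by the same $C'$ hypothesis that already made $\langle X\mid Y_1,\ldots,Y_s\rangle$ satisfy $B(8)$: a component of $\widetilde Y_i\cap \widetilde X_j$ is contained in a wall-piece or cone-piece of $\langle X\mid Y_1,\ldots,Y_s\rangle$ up to bounded error — more precisely, since $\widetilde X_j$ is covered by carriers of its hyperplanes, a piece-path in $\widetilde Y_i\cap\widetilde X_j$ decomposes into boundedly many wall-pieces of the original presentation — so a concatenation of at most $8$ of the new piece-paths is a concatenation of at most $8k$ of the old ones for a fixed $k$, and we conclude by invoking the original $B(8)$ with its (stronger-than-needed) $8$-piece bound after absorbing the constant; alternatively, and more cleanly, I would re-derive \eqref{B6:homotopy} for $Y_i$ directly from the $C'(\tfrac1{16})$-type small-cancellation hypothesis on the combined presentation exactly as in Theorem~\ref{Thm:C'18Proper}. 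Finally, for the ``Moreover'' clause: cone-equivalence for the enlarged presentation is generated by cone-equivalence through the $Y_i$ together with cone-equivalence through the $\widetilde X_j$; but two hyperplanes of $\widetilde{X^*}$ are cone-equivalent through $\widetilde X_j$ iff they both meet $\widetilde X_j$, and since every hyperplane of $\widetilde X_j$ is a singleton wall, ``being in the same wall of $\widetilde X_j$'' means ``being equal''. Hence the new cones contribute no new identifications among hyperplanes, and the equivalence relation — hence the wall decomposition of $\widetilde{X^*}$ — is identical to the one induced by $\langle X\mid Y_1,\ldots,Y_s\rangle$ alone. Choosing the wallspace structures as above makes the two wall collections literally the same set of subsets of the hyperplane set, which is the asserted conclusion.
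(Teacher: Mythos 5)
Your overall strategy coincides with the paper's: keep the old wallspace on each $Y_i$, put the singleton-hyperplane wallspace on each $\widetilde X_j$ (so Conditions~\ref{Def:B6}\eqref{B6:wallspace} and \ref{Def:B6}\eqref{B6:aut} are automatic for the new cones, and the ``Moreover'' clause follows because singleton walls create no new identifications among hyperplanes of $\widetilde{X^*}$), and then check the homotopy condition \ref{Def:B6}\eqref{B6:homotopy} separately for new and old cones. For the new cones, however, you make the problem harder than it is: by definition a piece-path in $\widetilde X_j$ is a path \emph{in a piece of} $\widetilde X_j$, and every such piece is a component of $\widetilde X_j\cap(\,\cdot\,)$, hence a subset of $\widetilde X_j$. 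So a concatenation of piece-paths with endpoints on $N(U)$ already lies in the simply connected complex $\widetilde X_j$, and since $N(U)$ is connected the path is automatically path-homotopic into $N(U)$ --- no fullness, convexity, or pushing across squares is needed. This is exactly the paper's one-line justification (``Condition \eqref{B6:homotopy} holds since $\widetilde X_j$ is contractible''); your worry about piece-paths wandering through other cones rests on a misreading of the definition of piece-path.

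The genuine gap is in your primary argument for the old cones $Y_i$. The $B(8)$ condition is a hard cap: it concerns concatenations of \emph{at most $8$} piece-paths, so knowing that each new piece-path decomposes into at most $k$ old ones converts a concatenation of $8$ new piece-paths into one of $8k$ old ones, about which the original $B(8)$ hypothesis says nothing --- there is no ``stronger-than-needed'' slack allowing you to absorb a multiplicative constant. (Moreover, the hyperplanes of $\widetilde X_j$ meeting $\widetilde Y_i\cap\widetilde X_j$ are dual to edges of $\widetilde Y_i$, so their carriers do not yield wall-pieces of the original presentation, and the proposed decomposition into old pieces is not available.) Your fallback is the correct move and is what the paper relies on implicitly: a component of $\widetilde Y_i\cap\widetilde X_j$ is one of the convex subcomplexes $V_k$ of Construction~\ref{construction}, whose diameter is bounded by the same constant $M_i=\max_k\diam(V_k)$ that already bounds the wall-pieces in Lemma~\ref{lem:free product smallcan to cubical smallcan}. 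Hence the enlarged presentation satisfies the same $C'(\frac{1}{20})$-type diameter bound on pieces of $Y_i$ after sufficient subdivision, and the derivation of Condition~\ref{Def:B6}\eqref{B6:homotopy} for $Y_i$ via Theorem~\ref{Thm:C'18Proper} goes through unchanged. You should state that identification of the new cone-pieces with the $V_k$ explicitly rather than hedging between two arguments.
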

\begin{proof}
We choose the same wallspace structure on each $ Y_i$ as before, and the natural wallspace structure given by the hyperplanes on each $\widetilde X_j$.
The cone-pieces between $\widetilde X_j$ and $Y_i$ are copies of the $V_k$ associated to $X_j$ that appear in $Y_i$,
and hence Condition~\ref{Def:B6}\eqref{B6:homotopy} holds for each $Y_i$ as before.
For each $\widetilde X_j$, Conditions~\ref{Def:B6}\eqref{B6:wallspace} and \ref{Def:B6}\eqref{B6:aut}
hold automatically by our choice of wallspace structure, and Condition~\ref{Def:B6}\eqref{B6:homotopy} holds since $\widetilde X_j$ is contractible.
\end{proof}
\begin{cor}\label{cor:single hyperplane}
For each wall $W$ of $\widetilde{X^*}$, the intersection of $W\cap \widetilde X_j$ is either empty or consists of a single hyperplane.
\end{cor}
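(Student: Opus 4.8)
The plan is to combine the two preceding results: Lemma~\ref{lem:wall cone intersection} (which says $W\cap Y$ is either empty or a single wall of $Y$, for $Y$ a lift of any cone $Y_i$) and Lemma~\ref{lem:still B6 with peripheries} (which says we may regard each $\widetilde X_j$ itself as a cone of the enlarged cubical presentation, with the wallspace structure in which each hyperplane is its own wall, without changing the walls of $\widetilde{X^*}$). First I would invoke Lemma~\ref{lem:still B6 with peripheries} to pass to the cubical presentation $\langle X\mid Y_1,\ldots,Y_s,\widetilde X_1,\ldots,\widetilde X_r\rangle$, noting the crucial point that the walls of $\widetilde{X^*}$ are unchanged. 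So a wall $W$ of the original $\widetilde{X^*}$ is literally a wall of the enlarged presentation, and $\widetilde X_j$ is (a lift of) a cone of that presentation.

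Next I would apply Lemma~\ref{lem:wall cone intersection} to this enlarged presentation, with the cone taken to be $\widetilde X_j$: it gives that $W\cap\widetilde X_j$ is either empty or a single wall of $\widetilde X_j$. But in the wallspace structure on $\widetilde X_j$ furnished by Lemma~\ref{lem:still B6 with peripheries}, each wall is just a single hyperplane. Hence $W\cap \widetilde X_j$ is either empty or a single hyperplane of $\widetilde X_j$, which is exactly the claim.

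The only point requiring care — and the main obstacle, such as it is — is making sure that Lemma~\ref{lem:wall cone intersection} is genuinely applicable: it is stated for lifts of cones $Y_i$ of a $B(8)$ cubical presentation $X^*$, so I must be confident that (i) the enlarged presentation is $B(8)$, which is precisely the conclusion of Lemma~\ref{lem:still B6 with peripheries}, and (ii) that the wall $W$ of the original $\widetilde{X^*}$ really does coincide with a wall of the enlarged $\widetilde{X^*}$, which is the ``moreover'' clause of that lemma. With both in hand the corollary is immediate. One should also observe that when $\widetilde X_j$ is viewed as a cone, the ``$i=j$'' exclusion in the definition of cone-piece is harmless here, since $W\cap\widetilde X_j$ being computed via cone-equivalence through the \emph{other} cones $Y_i$ (or through $\widetilde X_j$'s own hyperplanes, which are single walls) is exactly what Lemma~\ref{lem:wall cone intersection} already accounts for; no new argument is needed.
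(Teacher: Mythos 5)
Your proposal is correct and is exactly the paper's argument: the paper's proof is the one-line "combine Lemma~\ref{lem:still B6 with peripheries} and Lemma~\ref{lem:wall cone intersection}," and you have simply spelled out the details (viewing $\widetilde X_j$ as a cone of the enlarged $B(8)$ presentation whose walls are single hyperplanes, and using the "moreover" clause to identify the walls of the two presentations). Nothing further is needed.
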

\begin{proof}
This follows by combining Lemma~\ref{lem:still B6 with peripheries} and Lemma~\ref{lem:wall cone intersection}.
\end{proof}

\section{Construction of Pride}\label{sec:pride}
The following result was proven by Pride in \cite{Pride83}.
We give a slightly more geometric version of his proof, which was originally proven only for a $C(n)$ presentation instead of a $C'(\frac1n)$ presentation, which we can obtain as in Remark~\ref{rem:C'}.

\begin{lem}\label{lem:pride}Let  $G=\langle x,y\mid R_1, R_2, R_3, R_4, R_5, R_6\rangle$ where the relators $R_i$ are specified below for associated positive integers $\alpha_i, \beta_i, \gamma_i, \delta_i, \rho_i, \sigma_i, \tau_i, \theta_i$ for each $1\leq i\leq k$, and $k\geq 1$. Then $G$ does not split as an amalgamated product or HNN extension.
\begin{align*}
&R_1(x,y) = xy^{\alpha_1}xy^{\alpha_2}\cdots xy^{\alpha_k}\\
&R_2(x,y) = yx^{\beta_1}yx^{\beta_2}\cdots yx^{\beta_k}\\
&R_3(x,y) = x^{\gamma_1}y^{-\delta_1}x^{\gamma_2}y^{-\delta_2}\cdots x^{\gamma_k}y^{-\delta_k}\\
&R_4(x,y) = xy^{\rho_1}xy^{-\rho_1}xy^{\rho_2}xy^{-\rho_2}\cdots xy^{\rho_k}xy^{-\rho_k}\\
&R_5(x,y) = yx^{\sigma_1}yx^{-\sigma_1}yx^{\sigma_2}yx^{-\sigma_2}\cdots yx^{\sigma_k}yx^{-\sigma_k}\\
&R_6(x,y) = (xy)^{\tau_1}(x^{-1}y^{-1})^{\theta_1}(xy)^{\tau_2}(x^{-1}y^{-1})^{\theta_2}\cdots(xy)^{\tau_k}(x^{-1}y^{-1})^{\theta_k}
\end{align*}
\end{lem}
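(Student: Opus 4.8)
The plan is to exploit the Bass--Serre-theoretic fact that if $G$ splits (as an amalgamated product or HNN extension), then $G$ acts on a tree $T$ without a global fixed point, and to derive a contradiction by showing that every element of $G$ of a certain form must be elliptic while simultaneously a carefully chosen element is forced to be hyperbolic. Concretely, recall two standard facts: (i) if $a$ and $b$ are elliptic and $\langle a,b\rangle$ is not elliptic then $ab$ is hyperbolic, and more usefully (ii) if $a$, $b$, and $ab$ are all elliptic then $\langle a,b\rangle$ fixes a point of $T$. I would first observe that the six relators are engineered so that in each relator a product of two of the four ``atoms'' $x$, $y$, $xy$, $x^{-1}y^{-1}$ (appropriately matched per relator) is a proper power of another such atom up to the relation, or more precisely so that $R_i=1$ forces the relevant pair to generate an elliptic subgroup once we know each individual generator-atom is elliptic. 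The relators $R_1,\dots,R_6$ are precisely the six ``types'' needed to cover all the ways that $x$, $y$, $xy$ could fail to be simultaneously elliptic.

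Here are the key steps in order. First, set up: suppose $G=A*_C B$ or $G=A*_C$, so $G$ acts on the Bass--Serre tree $T$ minimally and without inversions, with edge stabilizers the conjugates of $C$; the action has no global fixed point. Second, I would run a case analysis on which of $x$, $y$ is hyperbolic. If both $x$ and $y$ are elliptic, use $R_3$: its shape $x^{\gamma_1}y^{-\delta_1}\cdots x^{\gamma_k}y^{-\delta_k}=1$ together with the ping-pong/translation-length estimate shows $\langle x,y\rangle$ cannot have $x,y$ with disjoint fixed sets (else the product would be hyperbolic with positive translation length, contradicting that it is trivial), so $x$ and $y$ have a common fixed point, i.e. $G=\langle x,y\rangle$ is elliptic, contradicting minimality/no-global-fixed-point. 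Third, suppose exactly one of them, say $x$, is hyperbolic and $y$ elliptic (the symmetric case uses $R_2$ in place of $R_1$): then $R_1= (xy^{\alpha_1})\cdots(xy^{\alpha_k})=1$; since $y$ is elliptic, each $xy^{\alpha_j}$ has the same translation length as $x$ and axis a bounded distance from $\mathrm{Axis}(x)$, and one shows the product of such elements is again hyperbolic (its translation length is positive), contradicting $R_1=1$. Fourth, the case $x$ elliptic, $y$ hyperbolic is symmetric via $R_2$. Fifth, the remaining case is $x,y$ both hyperbolic; here one considers $xy$ and $x^{-1}y^{-1}$: if $xy$ is hyperbolic, then $R_6=(xy)^{\tau_1}(x^{-1}y^{-1})^{\theta_1}\cdots=1$ gives a product of powers of the two hyperbolic-or-elliptic elements $xy$ and $(xy)^{-1}$-conjugates that again must be hyperbolic — here one needs that $xy$ and $x^{-1}y^{-1}=(yx)^{-1}$ do not have the same axis (otherwise use $R_4$ or $R_5$, which handle the degenerate axis-coincidence configurations via the $xy^{\rho}xy^{-\rho}$ pattern forcing a contradiction). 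In every branch one produces an element that algebraically equals the identity but is forced by tree geometry to translate a positive distance along its axis, which is the desired contradiction.

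The main obstacle, and the step I would spend the most care on, is the translation-length / axis-tracking estimate that converts ``a relator of the form (atom)$(\text{atom})\cdots(\text{atom})=1$ with the atoms hyperbolic and axes not all equal'' into ``the product is hyperbolic, contradiction.'' This is where one must be careful about the possibility that consecutive atoms share an axis, share only an endpoint, or have bounded-distance parallel axes; the six different relator shapes $R_1,\dots,R_6$ are exactly what is needed so that no matter which elliptic/hyperbolic configuration $(x,y)$ lands in, at least one relator is of a shape that precludes the relevant axis-coincidence degeneracy (e.g. $R_4$ alternates $xy^{\rho_j}$ with $xy^{-\rho_j}$ precisely to break the symmetry that would otherwise let the axes coincide). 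I would organize this as a single lemma: if $g_1,\dots,g_m$ are hyperbolic isometries of a tree with $g_1\cdots g_m$ elliptic, then the axes cannot be ``coherently oriented and overlapping in a staircase pattern,'' and then check that each $R_i$, read in the corresponding configuration, does exhibit that forbidden pattern. The rest — the Bass--Serre setup and the elliptic-common-fixed-point facts — is standard and I would cite it rather than reprove it.
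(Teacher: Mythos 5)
Your overall framework agrees with the paper's: act on the Bass--Serre tree of a putative splitting, run an elliptic/hyperbolic case analysis on $x$ and $y$, and in each case exhibit a relator that tree geometry forces to be a nontrivial isometry. However, the engine that is supposed to close each case is either missing or rests on false claims. Two concrete problems. First, in the case ($x$ hyperbolic, $y$ elliptic) you assert that each $xy^{\alpha_j}$ ``has the same translation length as $x$ and axis a bounded distance from $\mathrm{Axis}(x)$.'' This is false: the product of a hyperbolic and an elliptic isometry of a tree can be elliptic, or hyperbolic of arbitrary translation length, depending on how the elliptic's fixed set meets the axis. Second, in the both-elliptic case you want $R_3=x^{\gamma_1}y^{-\delta_1}\cdots$ to be hyperbolic whenever $\mathrm{Fix}(x)\cap\mathrm{Fix}(y)=\emptyset$. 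But $\mathrm{Fix}(x^{\gamma_i})$ can be strictly larger than $\mathrm{Fix}(x)$ (a power can even act trivially on $T$), so consecutive factors need not have disjoint fixed sets and the ping-pong estimate does not apply. Your proposed ``single lemma'' about products of hyperbolics with staircase axes is the right kind of statement, but you never verify its hypotheses, and in several branches the atoms are not hyperbolic at all.

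The missing idea, which is the heart of the paper's proof, is a normalization that replaces all these translation-length estimates: take $v\in\Min(x)$ minimizing $\dist_T(v,y\cdot v)$ (after arranging that $y$ has translation length at least that of $x$). This choice forces $x$ not to fix the initial edge of $[v,y\cdot v]$, and in the final case forces $x$ to swap the initial edges $e_+$ of $[v,y\cdot v]$ and $e_-$ of $[v,y^{-1}\cdot v]$ (using $R_1$). With that local information, each relator, read as a concatenation of segments $[g_1\cdots g_{j-1}\cdot v,\,g_1\cdots g_j\cdot v]$, becomes a local geodesic at every turn, hence a geodesic in the tree, contradicting $R_i=_G1$. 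In particular $R_6$ (read as a product of $xy$ and $x^{-1}y^{-1}$) is what rules out $v\notin\Min(y)$, $R_1$ and $R_3$ together kill the both-hyperbolic case, and $R_4$ is used precisely in the ($x$ elliptic, $y$ hyperbolic) case via the edge-swapping property --- not, as you suggest, for an axis-coincidence degeneracy in the both-hyperbolic case. Without this normalization and the turn-by-turn geodesic verification, the case analysis as you have sketched it cannot be completed.
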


\begin{proof}

Suppose $G = A*_CB$ or $G=A*_C$ and let $T$ be the associated Bass-Serre tree.
Without loss of generality, assume that the translation length of $y$ is at least as large as the translation length of $x$. Choose a vertex $v\in \Min(x)$ for which $\dist_T(y{\cdot} v,v)$ is minimal.

For use in the argument below, given a decomposition
of $w\in G$ as a product $w=w_1w_2\cdots w_\ell$% of elements of $G$
, the path $[v,w_1{\cdot}v][w_1{\cdot}v, w_1w_2{\cdot}v]\cdots[w_1w_2\cdots w_{\ell-1}{\cdot}v, w{\cdot}v]$ is said to \emph{read} $w$.

We now show that $v\in \Min(y)$. First suppose that $x$, and hence $y$, is a hyperbolic isometry. 
If $v\notin \Min(y)$, i.e.\ $\Min(x)\cap \Min(y) = \emptyset$, then the axes of $x$ and $y$ in $T$ are disjoint, and $v$ is a vertex in the axis of $x$ minimizing the distance between the two axes.
In particular, the concatenation of two nontrivial geodesics $[x^{-1}{\cdot}v,v][v, y{\cdot} v]$ would be a geodesic. See Figure~\ref{fig:minsets intersect}.
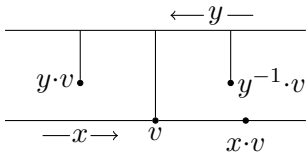
\begin{figure}
\begin{tikzpicture}
\tikzstyle{every node}=[circle, draw, fill,
                        inner sep=0pt, minimum width = 2pt]
\node[label=below:$v$] (v) at (0,0) {};
\node[label=below:$x{\cdot}v$] (xv) at (1.2,0) {};
\node[label=left:$y{\cdot}v$] (yv) at (-1,0.5) {};
\node[label=right:$y^{-1}{\cdot}v$] (y^-1v) at (1,0.5) {};
\draw (2,0) to (v) to (-2,0);
\draw (v) to (0,1.2);
\draw(2,1.2) to (-2,1.2);
\draw (-1,1.2) to (yv);
\draw (1,1.2) to (y^-1v);
\draw[->] (1.3,1.4) to (0.8,1.4) node[draw=none, fill=white] {$y$} to (0.2,1.4);
\draw[->] (-1.5,-0.2) to (-1,-0.2) node [draw=none, fill=white] {$x$} to (-0.5,-0.2);
\end{tikzpicture}
\caption{The case where $\Min(x)\cap\Min(y)=\emptyset$.}\label{fig:minsets intersect}
\end{figure}
 Similarly $[x{\cdot}v,v] [v,y{\cdot} v]$, $[x^{-1}{\cdot}v,v][v,y^{-1}{\cdot} v]$ and $[x{\cdot}v,v][v,y^{-1}{\cdot} v]$ would be geodesics. 
 Consequently, regarding $R_6$ as a product of elements $\{x^{\pm1},y^{\pm1}\}$, we see that the path reading $R_6$ would be a geodesic, which contradicts that $R_6=_G1$.
 Now, suppose that $x$ is elliptic and so $x{\cdot}v=v$. 
 Let $e$ denote the initial edge of $[v, y{\cdot}v]$ and note that $e$ is also the initial edge of $[v, y^{-1}{\cdot}v]$ since $v\notin\Min(y)$. 
 The choice of $v$ implies $x{\cdot}e\neq e$, as otherwise the other endpoint $v'$ of $e$ would satisfy $\dist_T(y{\cdot}v', v')<\dist_T(y{\cdot}v,v)$.
 Thus the concatenation of the nontrivial geodesics $[y^{-1}{\cdot}v,v][v,xy{\cdot}v]$ is a geodesic, and similarly for $[y^{-1}{\cdot}v,v][v, x^{-1}y^{-1}v]$, $[y{\cdot}v,v][v, xy{\cdot}v]$ and $[y{\cdot}v,v][v, x^{-1}y^{-1}v]$. 
It follows that regarding $R_6$ as a product of elements $\{xy,x^{-1}y^{-1}\}$, the path reading $R_6$ is a geodesic, which contradicts that $R_6=_G1$. Therefore $v\in \Min (y)$.

Since $v\in \Min(x)\cap\Min(y)$, the element $y$ is a hyperbolic isometry, because otherwise $x,y$ are elliptic and so $v$ is a global fixed point.
Suppose $x$ is also a hyperbolic isometry.
At least one of $[y^{-1}{\cdot}v,v][v,x{\cdot}v]$ or $[x^{-1}{\cdot} v, v][v,y{\cdot} v]$ is not a geodesic, because otherwise the path reading $R_1$ regarded as a product of $\{x^{\pm1},y^{\pm1}\}$ would be a geodesic. Consequently, both $[x{\cdot} v,v][v, y{\cdot} v]$ and $[x^{-1}{\cdot} v,v][v, y^{-1}{\cdot} v]$ are geodesics, and hence
regarding $R_3$ as a product of elements $\{x^{\pm1},y^{\pm1}\}$,
 the path reading $R_3$ must be a geodesic, which is a contradiction. Thus, $x$ is an elliptic isometry.

Let $e_+$ and $e_-$ denote the initial edges of  $[v, y{\cdot}v]$ and $[v,y^{-1}{\cdot}v]$ respectively. See Figure~\ref{fig:initial edges}.
Let us explain why $x{\cdot}e_+ = e_-$. 
Otherwise $[y^{-1}{\cdot}v,v][v,xy{\cdot}v]$ would be a geodesic since the last edge of $[y^{-1}{\cdot}v,v]$ is $e_-$ and the first edge of $[v,xy{\cdot}v]$ is $x{\cdot}e_+$. 
Likewise, for $n,m>0$ the path $[y^{-n}{\cdot}v,v][v,xy^m{\cdot}v]$ would be a geodesic, and so too would be its translate $[v,xy^n{\cdot}v][xy^n{\cdot}v, xy^nxy^m{\cdot}v]$. 
Regarding $R_1$ as a product  $(xy^{\alpha_1})(xy^{\alpha_2})\cdots (xy^{\alpha_k})$,
the path reading $R_1$ would be a geodesic, contradicting $R_1=_G1$.

Since $x{\cdot}e_+ = e_-$, neither $e_-$ nor $e_+$ is fixed by $x$. 
For any $n,m>0$ the last edge of $[y^n{\cdot}v,v]$ is $e_+$ and the first edge of $[v,xy^m{\cdot}v]$ is $x{\cdot}e_+=e_-\neq e_+$, and so the path $[y^n{\cdot}v,v][v,xy^m{\cdot}v]$ is a geodesic, and so is $[v,y^{-n}{\cdot}v][y^{-n}{\cdot}v,y^{-n}xy^m{\cdot}v]$. 
Similarly, the last edge of $[y^{-n}{\cdot}v,v]$ is $e_-$ and the first edge of $[v,xy^{-m}{\cdot}v]$ is $x{\cdot}e_-\neq e_-$, and so the path $[y^{-n}{\cdot}v,v][v,xy^{-m}{\cdot}v]$ is a geodesic as is $[v,xy^{n}{\cdot}v][xy^{n}{\cdot}v,xy^{n}{\cdot}xy^{-m}{\cdot}v]$.
Regarding $R_4$ as a product $(xy^{\rho_1})(xy^{-\rho_1}) \cdots  (xy^{\rho_k})(xy^{-\rho_k})$, we see that the path reading $R_4$ is a geodesic, contradicting $R_4=_G1$. 
This completes the proof.
\begin{com} (This proof did not use that the abelianization was finite to exclude the possibility of an HNN extension. It does not use that aspect of the form of $R_4$, but rather that $R_4$ has alternating positive and negative powers of $y$.)\end{com}
\end{proof}

\begin{rem}\label{rem:C'}
In the context of Lemma~\ref{lem:pride},  for each $n$ there are choices of
$k$ and $\{\alpha_i, \beta_i, \gamma_i, \delta_i, \rho_i, \sigma_i, \tau_i, \theta_i \ : \ 1\leq i\leq k\}$,
such that the presentation is $C'(\frac{1}{n})$.

Given $n>1$, let $k=3 n$ and choose $8k$ numbers $\alpha_i, \beta_i, \gamma_i, \delta_i, \rho_i, \sigma_i, \tau_i, \theta_i$ that are all different and lie between $50 n$ and $75 n$. Then any piece $P$ in $R_i$ where $i\neq 6$ is of the form $x^{l}yx^{m}$ or $y^{l}xy^{m}$ for some $l,m$ (possibly $0$). Thus $|P|\leq l+m+1\leq 150 n+1$. We also have $|R_i|\geq (k+1)50 n = (3n+1)50 n$ and so
$|P|\leq \frac 1n (150n+1)n\leq \frac 1 n |R_i|$. If $P$ is a piece in $R_6$, then $P$ is of the from $(xy)^l(x^{-1}y^{-1})^m$ and so $|P|\leq 2(l+m)\leq 300n$. We also have $|R_6|= 2(\tau_1+\theta_1+\tau_2+\dots+\theta_k) \geq 2(2k)50n = 600 n^2$. Hence $|P|\leq \frac 1 n |R_6|$.
\end{rem}

\begin{figure}
\begin{tikzpicture}
\tikzstyle{every node}=[circle, draw, fill,
                        inner sep=0pt, minimum width = 2pt]
%\draw (-3,0) to (3,0);
\node[label=below:$v_{}$] (v) at (0,0) {};
\node (-) at (-1,0) {};
\node (+) at (1,0) {};
\node (x+) at (0.9, 0.4) {};
\node[label=$y^{-1}{\cdot}v$] (y^-1v) at (-3,0) {};
\node[label=$y{\cdot}v$] (yv) at (3,0) {};
\node[label=$xy{\cdot}v$] (xyv) at (2.7, 1.2) {};
%\draw (v) -- node[] ++ (-);
\tikzstyle{every node}=[]
\draw (v) -- node[above] {$x{\cdot}e_+$} (x+) -- (xyv);
\draw (v) -- node[below] {$e_-$} (-) -- (-2,0) -- (y^-1v);
\draw (v) -- node[below] {$e_+$} (+) -- (2,0) -- (yv);
\end{tikzpicture}\caption{If $x{\cdot}e_+ \neq e_-$ then $[y^{-1}{\cdot}v,v][v,xy{\cdot}v]$ is a geodesic.}\label{fig:initial edges}
\end{figure}
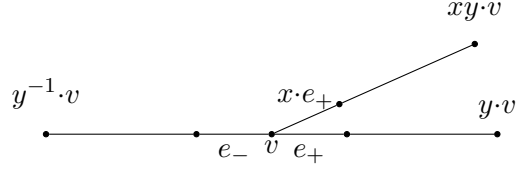

\begin{cor}\label{cor:pride}
Let $G_1,\ldots, G_r$ be nontrivial groups generated by finite sets of infinite order elements,
and suppose $r>1$.
For each $n>0$ there is a finitely related $C_{*}'(\frac1n)$ quotient
$G$ of $G_1*\cdots*G_r$
that  does not split.
\end{cor}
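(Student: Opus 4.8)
The plan is to build $G$ as a $C'_*(\tfrac1n)$ quotient of $G_1 * \cdots * G_r$ whose defining relators consist of a substituted Pride presentation together with a few auxiliary relators forcing $G$ to be generated by two elements, one in $G_1$ and one in $G_2$; the non-splitting of $G$ then follows by importing the proof of Lemma~\ref{lem:pride}.

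Since $r>1$ and each factor is generated by infinite-order elements, fix infinite-order $a\in G_1$ and $b\in G_2$, and let $S$ be a finite generating set of $G_1 * \cdots * G_r$ with $a,b\in S$. Set $m=\max(n,6)$. Form $R_1(a,b),\dots,R_6(a,b)$ using Pride's recipe with the exponents chosen as in Remark~\ref{rem:C'} (with $m$ in place of $n$) and, in addition, all pairwise distinct. Read inside $G_1 * \cdots * G_r$ these words are cyclically reduced, their syllables alternating between powers of $a\in G_1$ and powers of $b\in G_2$, and rerunning the estimate of Remark~\ref{rem:C'} with \emph{syllable} length in place of word length shows every piece between two of them has syllable length less than $\tfrac1m$ of the syllable length of each. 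For $s\in S\setminus\{a,b\}$ choose a reduced word $w_s$ in $\langle a\rangle * \langle b\rangle$ of syllable length at least $4m$ whose first and last syllables lie outside the factor containing $s$, with all its exponents large and distinct from each other and from those of the $R_i$; then $\rho_s:=s\,w_s(a,b)^{-1}$ is cyclically reduced. Let
\[
G \;=\; \big(G_1 * \cdots * G_r\big)\big/\nclose{R_1(a,b),\dots,R_6(a,b),\ \{\rho_s:s\in S\setminus\{a,b\}\}}.
\]
A check of overlaps --- distinct large exponents preclude long common syllable-subwords of any two of these relators, and the lone factor-syllable $s$ of $\rho_s$ matches nothing elsewhere --- shows every piece has syllable length bounded by a fixed small constant while every relator has syllable length at least $4m$, so the presentation is $C'_*(\tfrac1m)$, hence $C'_*(\tfrac1n)$. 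Since $m\geq 6$, each $G_i$ embeds in $G$; in particular $G$ is nontrivial and $a,b$ have infinite order in $G$.

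Finally, $G$ does not split. In $G$ we have $s=w_s(a,b)$ for every $s\in S\setminus\{a,b\}$, so $G=\langle a,b\rangle$, and $G$ is a quotient of Pride's group $P=\langle a,b\mid R_1(a,b),\dots,R_6(a,b)\rangle$. The proof of Lemma~\ref{lem:pride} carries over verbatim with $x=a$ and $y=b$: it uses only that the six relators are trivial in the group, together with --- at the single step where it excludes that $x$ and $y$ are both elliptic with a common fixed vertex of the Bass--Serre tree --- the fact that $\langle x,y\rangle$ is all of the group; both hold here. (Equivalently: $P$ is finitely generated and, by Lemma~\ref{lem:pride}, does not split, hence has Serre's property FA, which passes to the quotient $G$.) Therefore $G$ splits neither as an amalgamated free product nor as an HNN extension.

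I expect the main difficulty to be the $C'_*(\tfrac1n)$ verification once the $\rho_s$ are adjoined: one must confirm that they introduce no long pieces (among themselves, against the $R_i$, or via cyclic conjugates and inverses) and that they remain cyclically reduced, and must dispose of degenerate cases --- for instance an infinite-cyclic factor whose generator is $a$ or $b$, or a generator of $G_1$ that happens to be a power of $a$. These are routine once the words $w_s$ and the exponents of the $R_i$ are taken generically, together with the implication $C'_*(\tfrac1m)\Rightarrow C'_*(\tfrac1n)$ for $m\geq n$.
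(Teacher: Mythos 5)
Your construction is correct in outline, but it takes a genuinely different route from the paper. The paper imposes the six Pride relators $R_1(x,y),\dots,R_6(x,y)$ for \emph{every} pair $(x,y)\in S_p\times S_q$ with $p<q$, and then argues that in any splitting $G=A*_CB$ each such pair must lie in a common vertex group (since the Pride group $H(x,y)$ maps to $G$ and cannot act on the Bass--Serre tree without a fixed point); connectivity of the complete multipartite graph on the generators then forces all generators into one vertex group, and $R_4,R_5$ kill HNN splittings via the abelianization. Your version instead imposes Pride relators on a single pair $(a,b)$ and adds rewriting relators $\rho_s=s\,w_s(a,b)^{-1}$ so that $G$ becomes a quotient of the two-generator Pride group, whence non-splitting follows at once from property FA passing to quotients. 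What the paper's route buys is a uniform small-cancellation verification (only the six relator shapes, with the same exponent bookkeeping for every pair); what your route buys is a cleaner non-splitting argument and fewer Pride relators, at the price of the extra overlap analysis for the $\rho_s$, which you only sketch. Two points there deserve explicit care: (i) the exponents must be chosen pairwise distinct \emph{across all} the words $w_s$ and all the $R_i$, not merely within each $w_s$, or else two different $\rho_s,\rho_{s'}$ could share a long syllable subword; and (ii) your blanket claim that ``every piece has syllable length bounded by a fixed small constant'' is false for pieces of $R_6$ with itself, which can have syllable length on the order of $\max_i\tau_i+\max_i\theta_i$ --- this is harmless because $|R_6|_*$ is quadratically larger (as in Remark~\ref{rem:C'}), but it must be handled by that separate estimate rather than by the uniform bound. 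With those adjustments, and the degenerate cases you already flag, the argument goes through.
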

\begin{proof} Let $S_p$ be the given generating set of $G_p$ for each $p$, and assume no proper subset of $S_p$ generates $G_p$. The desired quotient $G$ arises from a presentation
$\langle G_1 *\cdots * G_r \mid \mathcal R\rangle$, where following Lemma~\ref{lem:pride}, the set of relators is:
$$ \mathcal R \ = \ \left\{\ R_\ell(x,y) \ : \   1\leq \ell \leq 6, \ (x,y) \in S_p\times S_q, %\ y\in S_q,
\text{ where } \ 1\leq p<q\leq r \right\}$$
where $k(x,y)=3n$ for each $(x,y)\in S_p\times S_q,$ and
where the constants $\alpha_i(x,y)$, $\beta_i(x,y)$, $\gamma_i(x,y)$, $\delta_i(x,y)$, $\rho_i(x,y)$, $\sigma_i(x,y)$, $\tau_i(x,y)$, $\theta_i(x,y)$ will be described below.
For each $(x,y)\in S_p\times S_q,$ let $\alpha_i(x,y)$, $\delta_i(x,y)$ and $\rho_i(x,y)$ be distinct integers $>1$ such that $y^{m}\notin \langle z\rangle$ for $m\in \{\alpha_i(x,y),\delta_i(x,y),\rho_i(x,y)\}$ and $z\in S_q-\{y\}$. This is possible because $y$ has infinite order and $y\notin \langle z\rangle$. Similarly, let $\beta_i(x,y)$, $\gamma_i(x,y)$ and $\sigma_i(x,y)$ be distinct integers $>1$ such that $x^m\notin \langle z \rangle$ for $m\in\{\beta_i(x,y),\gamma_i(x,y),\sigma_i(x,y)\}$ and $z\in S_p-\{x\}$. Finally, let $\tau_i(x,y)$ and $ \theta_i(x,y)$ be distinct integers between $10n$ and $20n$.

Having chosen the above constants for each $(x,y) \in S_p\times S_q,$ we now show that the presentation for $G$ is $C'_*(\frac{1}{n})$.
We begin by observing that each $|R_\ell(x,y)|_*\geq 6n$.
Let $P$ be a piece in $R^1=R_{\ell_1}(x_1, y_1)$ and $R^2=R_{\ell_2}(x_2, y_2)$ where $x_1\in S_{p_1}$,
 $y_1\in S_{q_1}$, $x_2\in S_{p_2}$, and $y_2\in S_{q_2}$.
If $\{p_1, q_1\}\neq\{p_2, q_2\}$ then $|P|_*\leq 1$.
 Assume that $\{p_1, q_1\}=\{p_2, q_2\}$. First suppose that $\ell_1\neq 6$, then $|P|_*\leq 3$. Indeed, if $|P|_*\geq 4$ then two consecutive syllables would appear in distinct cyclically reduced forms of relators, which contradicts our choice of the constants. If $\ell_1=6$, then $|P|_*\leq \max\{\tau_i(x,y)\}+\max\{\theta_i(x,y)\}\leq 80 n$. We also have $|R_6(x,y)|_* = 2\left(\tau_1(x,y)+\theta_1(x,y)+\dots+\tau_k(x,y)+\theta_k(x,y)\right)\geq 2(2k)10n = 120n^2$, so $|P|_*\leq \frac 1 n |R_6(x,y)|_*$.

We now show that $G$ does not split as an amalgamated product.
For each $x\in S_p, y\in S_q$ with $p\leq q$ we let $H(x,y) = \langle x,y \mid R_\ell(x,y) : 1\leq \ell \leq 6\rangle$.
By Lemma~\ref{lem:pride}, we see that $H(x,y)$
does not split.
As there is a homomorphism $H(x,y)\rightarrow G$, we deduce that
for any splitting of $G$ as an amalgamated free product $G=A*_C B$,
 the elements $x,y$ are either both in $A$ or both in $B$. 
 Otherwise, the action of $H(x,y)$ on the Bass-Serre tree of $G = A*_CB$ induces a non-trivial splitting. 
 Considering all such pairs $(x,y)$,
  we find that the generators of $G$ are either all in $A$ or all in $B$. Moreover $G$ cannot split as an HNN extension, since the relators $R_4(x,y)$ and $R_5(x,y)$
  show that all generators have finite order in the abelianization of $G$.
\end{proof}
\section{Main theorem}\label{sec:main}
The small cancellation over a free product condition $C'_{*}(\frac{1}{n})$ was defined in Definition~\ref{defn:small can free prod}. We start with the following Lemma.

\begin{lem}\label{lem:free product smallcan to cubical smallcan}
If $\langle G_1,\ldots, G_r \mid R_1,\ldots, R_s\rangle$ is $C'_{*}(\frac{1}{n})$ then for a sufficient subdivision of $e_1,\dots, e_r$ the cubical presentation $X^*$ is $C'(\frac{1}{n})$.
\end{lem}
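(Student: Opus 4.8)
The plan is to translate the combinatorial statement about syllable-pieces in the free-product presentation into a statement about cone-pieces and wall-pieces of the cubical presentation $X^*$, and then to make the $C'(\frac1n)$ inequality for $X^*$ follow by choosing the subdivision of $e_1,\dots,e_r$ fine enough. The first step is to record the relationship between the combinatorial metric on $Y_i$ (after subdivision) and the syllable length $|R_i|_*$. By Construction~\ref{construction}, $Y_i$ is a ``necklace'' of rectangles $V_k$ joined by arcs $\sigma_k$ which, after subdividing each $e_j$ into $N$ edges, each contribute $2N$ edges. Since $Y_i$ deformation retracts onto a closed geodesic reading $R_i$, we have $\systole{Y_i}=|w_i|$ where $|w_i| = \sum_k \ell(V_k) + 2N|R_i|_*$, and $\ell(V_k)$ (the $V_k$-contribution, i.e.\ the distance in $\widetilde X_{m(k)}$ from $p$ to $h_k p$) is a fixed quantity independent of $N$. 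So $\systole{Y_i} = 2N|R_i|_* + O(1)$ as $N\to\infty$, with the $O(1)$ bounded in terms of the fixed relators.

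Next I would analyze pieces. A cone-piece $P = \widetilde Y_i\cap\widetilde Y_j$ (or wall-piece $P = \widetilde Y_i\cap N(\widetilde U)$) is a path in $\widetilde X$; project it to $X$ and read off the word it traverses. The key geometric observation — which is exactly the content of the necklace structure together with the fact that the $\widetilde X_j$ are convex — is that such a piece $P$ lifts to a subpath of (a lift of) $\widetilde Y_i$, hence is a concatenation of a bounded number of pieces of the $V_k$'s and the connecting arcs $\sigma_k$; translating into the free product, the word read by $P$ differs from a syllable-piece $P_*$ of $R_i,R_j$ (in the sense of Definition~\ref{defn:small can free prod}) by at most a bounded amount of ``overhang'' inside the two extreme rectangles $V_k$, $V_{k'}$. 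Thus $\diam(P) \le \ell(\text{overhang}) + (\text{inner rectangles' contribution}) + 2N|P_*|_*$, and the inner-rectangle and overhang contributions are again $O(1)$, independent of $N$. The $C'_*(\frac1n)$ hypothesis gives $|P_*|_* \le \frac1n|R_i'|_* - 1$ (integer strict inequality, so $|P_*|_* + 1 \le \frac1n |R_i|_*$), and here I must be slightly careful that cyclic conjugation and weak cyclic reduction on the free-product side match the behavior of closed geodesics and their subpaths on the cubical side — but this is the same necklace bookkeeping. The wall-piece case is handled identically once one notes that a hyperplane disjoint from $\widetilde Y_i$ can only meet its carrier in the rectangle parts (the arcs $\sigma_k$ being arcs), contributing at most a fixed $O(1)$ to the diameter, hence $|P_*|_* = 0$ for wall-pieces and such pieces have uniformly bounded diameter.

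Assembling the estimates: for any cone-piece or wall-piece $P$ involving $Y_i$,
$$\diam(P) \;\le\; 2N\Big(\tfrac1n|R_i|_* - 1\Big) + C \;=\; \tfrac1n\big(2N|R_i|_*\big) - 2N + C,$$
where $C$ is a constant depending only on the finitely many fixed relators and factor groups, not on $N$. Meanwhile $\systole{Y_i} = 2N|R_i|_* + O(1) \ge 2N|R_i|_*$ up to a bounded error, so $\frac1n\systole{Y_i} \ge \frac1n(2N|R_i|_*) - C'$ for a constant $C'$. Hence $\diam(P) < \frac1n\systole{Y_i}$ provided $2N > C + C'$, i.e.\ provided $N$ is large enough — and $N$ can be taken uniform over all $i$ and all pieces since there are finitely many relators. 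That establishes $C'(\frac1n)$ for $X^*$ after a sufficient subdivision.

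The main obstacle I anticipate is the bookkeeping in the second step: making precise that a cone-piece of $X^*$ really does correspond, up to a bounded and subdivision-independent overhang, to a syllable-piece of the free-product presentation, and in particular controlling the interaction between (i) cyclic/weak-cyclic reduction on the algebraic side and the choice of geodesic representatives $w_i$ on the cube-complex side, and (ii) the possibility that a ``piece'' enters and exits a single rectangle $V_k$ in a complicated way. Both are handled by the convexity of each $\widetilde X_j$ in $\widetilde X$ (so that $V_k$ is a convex subcomplex and a geodesic restricted to it is a geodesic) and the fact that the connecting arcs $\sigma_k$ are genuinely arcs, but writing it cleanly is where the work lies; once it is in place, the ``make $N$ large'' conclusion is immediate because only the rectangle-and-arc lengths scale with $N$ while all piece overhangs stay bounded.
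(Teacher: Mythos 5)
Your proposal is correct and follows essentially the same route as the paper: the same necklace decomposition of $Y_i$ into rectangles $V_k$ and connecting arcs, the same observation that only the arc lengths scale with the subdivision parameter while rectangle contributions and systole corrections stay $O(1)$, and the same translation of a cone-piece into a syllable-piece so that $C'_*(\frac1n)$ controls the dominant term. One small arithmetic correction: since $|P_*|_*$ and $|R_i|_*$ are integers, the strict inequality $|P_*|_*<\frac1n|R_i|_*$ gives $|P_*|_*\le\frac{|R_i|_*-1}{n}$, i.e.\ a gap of $\frac1n$ rather than the $1$ you claim; the resulting slack $\frac{2N}{n}$ still grows with $N$ and dominates the constants, so your concluding step is unaffected (and this integrality trick is in fact a slightly cleaner way to close the inequality than the $\epsilon$-bookkeeping the paper uses).
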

\begin{proof} Let $X'$ be a subdivision of $X$ induced by a $q$-fold subdivision of each $e_j$. We accordingly let $Y_i'$ be the induced subdivision of $Y_i$,
so $Y_i' =\bigsqcup V_k\cup\bigsqcup \sigma_k$ as in Construction~\ref{construction} and with each $\sigma$-edge subdivided.
 We thus obtain a new cubical presentation $\langle X'\mid Y_1',\dots,Y_s'\rangle$. 
 Since $Y_i$ has $|R_i|_{*}$ $\sigma$-edges, the systole $\systole{Y_i'} = \systole{Y_i} + 2|R_i|_{*}(q-1)$. Note that $\systole{Y_i'}> \sum_{i=1}^{|R_i|_{*}} |\sigma_i| = 2q|R_i|_{*}$
 and so $\systole{Y_i'}> 2(1+\epsilon)q|R_i|_{*}$ for sufficiently small $\epsilon>0$. Let $M_i = \max_k\left\{\diam(V_k)\right\}$. For a wall-piece $P$ we have $\diam(P)<M_i$. Consider a maximal cone-piece $P$ in $Y_i'$, and suppose it  intersects $\ell$ different $V_k$'s and contains
 $\ell'$ different $e_k$ edges. Note that $2\ell\geq \ell'$ since if $P$ starts or ends with an entire $\sigma_k$ arc, then it intersects an additional $V_k$ (possibly trivially). We have $\diam(P)\leq \ell M_i + q\ell'$. When $\ell'>0$, for any $\epsilon>0$ we can choose $q\gg 0$  so that
 $\diam(P)< (1+ \epsilon)q\ell'$.
 Since $P$ corresponds to a length~$\ell$ syllable piece, the $C'_{*}(\frac1n)$ hypothesis implies that $\ell < \frac{1}{n}|R_i|_{*}$, and so
 $\diam(P)< (1+ \epsilon)q\ell'  < 2(1+\epsilon)q(\frac{1}{n}|R_i|_{*}) <\frac{1}{n}\systole{Y_i'}$.
  When $\ell'=0$, then assuming $q>nM_i$ we have $\diam(P)\leq M_i < 2\frac{q}{n}|R_i|_{*}<\frac1n\systole{Y_i'}$.
 \end{proof}

\begin{thm}\label{thm:main}
Suppose $G= \langle G_1,\ldots, G_r \mid R_1,\ldots, R_s\rangle$ satisfies $C'_{*}(\frac{1}{20})$.
If each $G_i$
is the fundamental group of a $[$compact$]$ nonpositively curved cube complex,
then $G$
acts properly $[$and compactly$]$ on a CAT(0) cube complex.

Moreover, $G$ acts freely if each $\langle R_i\rangle$ is a maximal cyclic subgroup.
\end{thm}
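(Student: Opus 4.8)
The plan is to reduce Theorem~\ref{thm:main} to Theorem~\ref{Thm:C'18Proper} together with the relative-cocompactness package of Section~\ref{sec:relative cocompactness}. First I would apply Lemma~\ref{lem:free product smallcan to cubical smallcan} to pass to a sufficiently fine subdivision of the edges $e_1,\dots,e_r$, so that the cubical presentation $X^*=\langle X\mid Y_1,\dots,Y_s\rangle$ associated to the long wedge $X=X_1\vee\cdots\vee X_r$ is $C'(\frac1{20})$. By Construction~\ref{construction} each $Y_i$ is a compact nonpositively curved cube complex that deformation retracts onto a closed combinatorial geodesic $w_i$ reading the relator $R_i$; after a further subdivision we may assume each $|w_i|$ is even, and the convexity of the hulls $V_k$ gives that each hyperplane $U$ of $Y_i$ has $Y_i\setminus U$ contractible and an embedded carrier of diameter $<\frac1{20}\systole{Y_i}$ (a wall-piece bound, hence covered by $C'(\frac1{20})$). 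Thus Theorem~\ref{Thm:C'18Proper} applies: $X^*$ is $B(8)$, $\pi_1X^*$ acts properly on the CAT(0) cube complex $C(\widetilde{X^*})$ dual to the wallspace on $\widetilde{X^*}$, and freely when each $\pi_1Y_i=\langle R_i\rangle$ is maximal cyclic. This already yields the ``properly $[$and freely$]$'' part of the statement, since $\pi_1X^*=G$.

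Next I would address cocompactness under the added hypothesis that each $G_i=\pi_1X_i$ with $X_i$ compact. Here $X$ is compact, and $G=\pi_1X^*$ acts properly and cocompactly on the cubical part $\widetilde X$ of $\widetilde{X^*}$, preserving the metric and the wallspace; there are finitely many $G$-orbits of walls because $X^*$ has finitely many cones and finitely many hyperplane-orbits in $X$. By Lemma~\ref{lem:16relhyp} (noting $C'_*(\frac1{20})\Rightarrow C'_*(\frac16)$) $G$ is hyperbolic relative to $\{G_1,\dots,G_r\}$. For each $i$ let $\widetilde X_i\subset\widetilde{X^*}$ be a lift of the universal cover of $X_i$; it is $G_i$-invariant and $G_i$-cocompact, and it is convex by \cite[Lem~3.74]{WiseIsraelHierarchy}, with the property that any square having an edge in $\widetilde X_i$ lies in $\widetilde X_i$. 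Lemma~\ref{lem:wallsQIembed} gives that $N(W)\to\widetilde{X^*}$ is a quasi-isometric embedding for each wall $W$ (pieces are uniformly bounded since $X$ and the $Y_i$ are compact), so by Theorem~\ref{thm:qiembedded implies rqc} each $\stab(W)$ is relatively quasiconvex; and $\stab(W)$ acts cocompactly on $W$ since walls are built from finitely many hyperplane-orbits. These are exactly the hypotheses of Theorem~\ref{thm:RelCocompactWallspace}.

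It then remains to check that the dual complexes $C_\star(\widetilde X_i)=C(\widetilde{X^*},\mathcal W_i)$ are $G_i$-cocompact, where $\mathcal W_i$ is the set of walls $W$ with $\diam\big(W\cap\neb_d(\widetilde X_i)\big)=\infty$ for some $d$. This is precisely Lemma~\ref{lem:no extra big walls}: since $X$ is $C'(\frac1{16})$ (again implied by $C'(\frac1{20})$) and squares touching $\widetilde X_i$ lie in $\widetilde X_i$, every such wall $W$ already contains a hyperplane of $\widetilde X_i$, so $C_\star(\widetilde X_i)=\widetilde X_i$, which is $G_i$-cocompact because $X_i$ is compact. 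Feeding this into Theorem~\ref{thm:RelCocompactWallspace} gives $C(\widetilde{X^*})=GK\cup\bigcup_i G\,C_\star(\widetilde X_i)$ for a compact $K$, hence $G$ acts cocompactly on $C(\widetilde{X^*})$. Combined with properness from the first paragraph, this completes the proof of both bracketed and unbracketed forms, and of the ``freely'' addendum.

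The main obstacle I anticipate is bookkeeping rather than a deep new idea: verifying that the geometric hypotheses of Theorem~\ref{Thm:C'18Proper} (contractible hyperplane complements, embedded carriers with the $\frac1{20}$ diameter bound, evenness of $|w_i|$) genuinely survive the subdivision from Lemma~\ref{lem:free product smallcan to cubical smallcan}, and that the various ``squares touching $\widetilde X_i$ stay in $\widetilde X_i$'' and convexity statements needed to invoke Lemma~\ref{lem:no extra big walls} and Theorem~\ref{thm:RelCocompactWallspace} hold in our specific $Y_i$'s (where cone-pieces between $\widetilde X_j$ and $Y_i$ are the convex cells $V_k$). One should also be slightly careful that the wallspace used for relative cocompactness — the one including the peripheral cones $\widetilde X_j$ as in Lemma~\ref{lem:still B6 with peripheries} — induces the same walls on $\widetilde{X^*}$ as the original $B(8)$ structure, so that Theorem~\ref{Thm:C'18Proper} and Theorem~\ref{thm:RelCocompactWallspace} are speaking about the same cube complex $C(\widetilde{X^*})$.
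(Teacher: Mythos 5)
Your proposal is correct and follows essentially the same route as the paper's proof: subdivide via Lemma~\ref{lem:free product smallcan to cubical smallcan}, invoke Theorem~\ref{Thm:C'18Proper} for properness/freeness, then combine Lemma~\ref{lem:still B6 with peripheries}, Lemma~\ref{lem:16relhyp}, Lemma~\ref{lem:wallsQIembed}, Theorem~\ref{thm:qiembedded implies rqc}, Theorem~\ref{thm:RelCocompactWallspace}, and Lemma~\ref{lem:no extra big walls} to get cocompactness in the compact case. Your added care about the hypotheses of Theorem~\ref{Thm:C'18Proper} surviving subdivision and about the two wallspace structures inducing the same walls on $\widetilde{X^*}$ matches the points the paper addresses via Lemma~\ref{lem:still B6 with peripheries}.
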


\begin{proof}
Let $X^*$ be the associated cubical presentation.
Lemma~\ref{lem:free product smallcan to cubical smallcan} asserts that $X^*$ is $C'(\frac{1}{20})$ after a sufficient subdivision. For each hyperplane $U$ in $Y_i$ we have $\diam(N(U))<\frac 1 {20} \systole{Y_i}$ if the subdivision is sufficient. Theorem~\ref{Thm:C'18Proper} asserts that $\pi_1X^*$ acts freely (or with finite stabilizers if relators are proper powers) on a CAT(0) cube complex $C$ dual to $\widetilde{X^*}$.

Let $X'^*$ be the cubical presentation $\langle X \mid \{Y_i\}, \{\widetilde X_j\}\rangle$.
By Lemma~\ref{lem:still B6 with peripheries},  $X'^*$ satisfies $B(8)$ with our previously chosen wallspace structure on each $Y_i$ and the hyperplane wallspace structure on each $\widetilde X_j$.
Thus by Lemma~\ref{lem:wall cone intersection} each $\widetilde X_j$ in $\widetilde{X^*} = \widetilde {X'^*}$
intersects the walls of $\widetilde{X^*}$ in hyperplanes of $\widetilde X_j$.

Lemma~\ref{lem:16relhyp} asserts that $\pi_1X^*$ is hyperbolic relative to $\{G_1,\ldots, G_r\}$.

The pieces in $X^*=\langle X \mid \{Y_i\} \rangle $
are uniformly bounded since $\diam(Y_i)$ is uniformly bounded.
Thus $N(W)\rightarrow \widetilde{X^*}$ is quasi-isometrically embedded by Lemma~\ref{lem:wallsQIembed}.
Hence $\stab(N(W))$ is relatively quasiconvex with respect to $\{\pi_1X_j\}$ by Theorem~\ref{thm:qiembedded implies rqc}.

Theorem~\ref{thm:RelCocompactWallspace} asserts that $\pi_1X^*$ acts relatively cocompactly on $C$.
Lemma~\ref{lem:no extra big walls} asserts that each $C_\star(\widetilde X_i)=\widetilde X_i$.
Hence if each $X_i$ is compact, we see that $C$ is compact.
\end{proof}

\section{A cubulated group that does not virtually split}\label{sec:doesn't split}
Examples were given in \cite{WiseIsraelHierarchy} of a compact nonpositively curved cube complex $X$
such that $X$ has no finite cover with an embedded hyperplane. It is conceivable that those groups have no (virtual) splitting, but this was not confirmed there.
\begin{exmp}\label{exmp:nonsplitting cubical group}
There exists a nontrivial group $G$ with the following two properties:
\begin{enumerate}
\item $G=\pi_1X$ where $X$ is a compact nonpositively curved cube complex.
\item $G$ does not have a finite index subgroup that splits as an amalgamated product or HNN extension.
\end{enumerate}

 Let $G_1$ be the fundamental group of $X_1$ which is a compact nonpositively curved cube complex with a nontrivial fundamental group but no nontrivial finite cover.
For instance, such complexes were constructed in \cite{Wise96Thesis} or \cite{BurgerMozes97}.
By Corollary~\ref{cor:pride} there exists a $C'_{*}(\frac{1}{20})$ quotient $G$
of the free product $G_1 * \cdots * G_1$ of $r$ copies of $G_1$, such that $G$ does not split. The group $G$ has no finite index subgroups since $G_1 * \cdots * G_1$ has none.
Since $G_1=\pi_1X_1$, by Theorem~\ref{thm:main}, $G$ is the fundamental group of a compact nonpositively curved cube complex.
\end{exmp}

\bibliographystyle{alpha}
%\bibliographystyle{plain}
%\bibliography{C:/users/dani/dropbox/papers/wise}
\bibliography{../../kasia}
%\bibliography{wise}

%HOME
%%%%%%%%%%%%%%%%%%%%%%%%%%%%%%%%%%%%%%%%%%%%%%%%%%%%%%%%%%%%%%%%%%%%%%%%%%%%%%%%%%%%%%%%%%%
%
%
\end{document}